\newcommand{\oo}{\mathcal{O}}
\newcommand{\p}{\mathbb{P}}
\newcommand{\e}{\mathcal{E}}
\newcommand{\nn}{\mathcal{N}}
\newcommand{\ff}{\mathcal{F}}
\newcommand{\q}{\mathcal{Q}}
\newcommand{\ggg}{\mathcal{G}}
\newcommand{\ext}{\rm{Ext}}
\newcommand{\rt}{\rightarrow}
\newtheorem{thm}{Theorem}[section]
\newtheorem{prop}[thm]{Proposition}
\newtheorem{cor}[thm]{Corollary}
\newtheorem{lem}{Lemma}
\newtheorem*{prop*}{Proposition}
\newtheorem*{thm*}{Theorem}
\newtheorem*{claim*}{Claim}
\theoremstyle{definition}
\theoremstyle{remark}
\newtheorem{rem}{Remark}
\theoremstyle{remark}
\begin{document}
\title{On globally generated vector bundles on projective spaces II}
\author[Jos\'e Carlos Sierra and Luca Ugaglia]{Jos\'e Carlos Sierra* and Luca Ugaglia}

\address{Instituto de Ciencias Matem\'aticas (ICMAT), Consejo Superior de Investigaciones Cient\'{\i}ficas (CSIC), Campus de Cantoblanco, 28049 Madrid, Spain}
\email{jcsierra@icmat.es}

\address{Dipartimento di Matematica e Informatica, Universit\`a degli Studi di Palermo, Via Archirafi 34,  90123, Palermo, Italy}
\email{luca.ugaglia@unipa.it}

\thanks {* Research supported by the ``Ram\'on y Cajal" contract
RYC-2009-04999, the project MTM2009-06964 of MICINN and the ICMAT ``Severo Ochoa" project SEV-2011-0087 of MINECO}

\begin{abstract}
Extending the main result of \cite{su2}, we classify globally generated vector bundles on $\p^n$ with first Chern class equal to $3$.
\end{abstract}
\maketitle

\section{Main result}

The main result of the paper is the following.

\begin{thm} \label{thm:main}
Let $\e$ be a globally generated vector bundle of rank $k$ on
$\p^n$. If $c_1(\e)=3$ and $c_2(\e)\leq 4$ then one of the following
holds:
\begin{enumerate}
\item [(i)] $c_2(\e)=0$ and $\e=\oo_{\p^n}(3)$;
\item [(ii)] $c_2(\e)=2$ and $\e=\oo_{\p^n}(2)\oplus\oo_{\p^n}(1)$;
\item [(iii)] $c_2(\e)=3$ and $\e=\oo_{\p^n}(2)\oplus T_{\p^n}(-1)$;
\item [(iv)] $c_2(\e)=3$ and $\e=\oo_{\p^n}(1)\oplus\oo_{\p^n}(1)\oplus\oo_{\p^n}(1)$;
\item [(v)] $c_2(\e)=4$ and $\e=\oo_{\p^n}(1)\oplus\oo_{\p^n}(1)\oplus T_{\p^n}(-1)$;
\item [(vi)] $c_2(\e)=4$ and $\e=\oo_{\p^3}(1)\oplus\Omega_{\p^3}(2)$;
\item [(vii)] $c_2(\e)=4$ and $\e=\Omega_{\p^4}(2)$;
\item [(viii)] $\e$ is given by an exact sequence $0\to\oo_{\p^n}^{\oplus s}\to\ggg\oplus\oo_{\p^n}^{\oplus r}\to\e\to 0$, where $h^0(\e^*)=r$, $h^1(\e^*)=s$ and $\ggg$ is a bundle as above.
\end{enumerate}
\end{thm}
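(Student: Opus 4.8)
The plan is to single out the ``minimal'' members of the list---those with no free summand and with $h^1(\e^*)=0$---to prove that these are exactly (i)--(vii), and then to show that every remaining bundle is reconstructed from a minimal one by the elementary modification (viii). The two numbers $r=h^0(\e^*)$ and $s=h^1(\e^*)$ are precisely the obstructions to minimality: a section of $\e^*$ is the same as a map $\e\rt\oo_{\p^n}$, which is surjective because $\e$ is globally generated, so $r$ counts trivial quotients, while $s$ records the extension data that prevents these quotients from splitting off cleanly. Thus the heart of the matter is the classification of minimal bundles, and (viii) is a formal device that rebuilds the rest.

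To produce (viii) I would first form the universal extension
\[
0\rt\oo_{\p^n}^{\oplus s}\rt\mathcal F\rt\e\rt 0
\]
with class the identity under $\ext^1(\e,\oo_{\p^n}^{\oplus s})\cong\mathrm{End}(H^1(\e^*))$ (here $\ext^1(\e,\oo_{\p^n})=H^1(\e^*)$ since $\e$ is locally free, and $s=h^1(\e^*)$). Dualizing and using $H^i(\oo_{\p^n})=0$ for $i>0$, the connecting map $H^0(\oo_{\p^n}^{\oplus s})\rt H^1(\e^*)$ becomes the identity, so $H^1(\mathcal F^*)=0$; moreover $\mathcal F$ is again globally generated with the same $c_1,c_2$. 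Once $h^1(\mathcal F^*)=0$, one checks that the $r$ independent maps $\mathcal F\rt\oo_{\p^n}$ can be split off simultaneously, giving $\mathcal F\cong\ggg\oplus\oo_{\p^n}^{\oplus r}$ with $h^0(\ggg^*)=h^1(\ggg^*)=0$; feeding this back into the universal extension yields exactly (viii). It therefore remains to classify the minimal $\ggg$.

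For a minimal $\ggg$ I would read off the generic splitting type on a general line $\ell$, where $\ggg|_\ell=\bigoplus\oo(a_i)$ with $a_i\ge 0$ and $\sum a_i=3$; minimality forbids a trivial summand in this splitting, so the type is $(3)$, $(2,1)$ or $(1,1,1)$ and the rank is pinned down. I would then induct on $n$ by restricting to a general hyperplane $H\cong\p^{n-1}$: the restriction is globally generated with the same Chern classes, the classification in lower dimension (and the $c_1=2$ result of \cite{su2} for the pieces that occur) applies, and the vanishing $H^1(\e^*)=0$ together with $H^i(\oo_{\p^n})=0$ lets one lift the answer back to $\p^n$. Running through the possible $c_2$: the value $0$ forces $\ggg=\oo_{\p^n}(3)$; $c_2=1$ is excluded because it occurs for no globally generated bundle with $c_1=3$; $c_2=2,3$ yield the split bundles (ii)--(iv); and $c_2=4$ gives the split bundle (v) together with the two exceptional bundles $\oo_{\p^3}(1)\oplus\Omega_{\p^3}(2)$ and $\Omega_{\p^4}(2)$, which can occur only in the indicated dimensions since $c_1(\Omega_{\p^n}(2))=n-1$.

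The decisive step is the $c_2=4$ case, and in particular recognizing $\Omega_{\p^4}(2)$. Knowing the splitting type $(1,1,1)$ and the Chern classes $(3,4)$ is not enough: one must rule out every other indecomposable bundle with these invariants and then prove rigidity, i.e.\ that a bundle agreeing with the expected one on a general hyperplane actually equals it. I expect this to require sharp cohomological input---Horrocks-type splitting criteria, or a Beilinson spectral sequence computation of $H^\bullet(\e(-1))$ and $H^\bullet(\e(-2))$ to detect $\Omega(2)$ intrinsically---together with careful control of how the list degenerates as $n$ grows, since the exceptional bundles disappear from the classification for $n\ge 5$. By contrast, the split cases reduce to additivity of Chern classes and the Euler sequence, and should be routine.
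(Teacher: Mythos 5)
Your reduction to ``minimal'' bundles --- the universal extension killing $h^1(\e^*)$ followed by splitting off the trivial quotients counted by $h^0(\e^*)$ --- is sound, and it is exactly the paper's first reduction (Lemma \ref{lem:1st reduction}, i.e.\ \cite[Lemmas 3 and 4]{su2}). The problem is in the classification of the minimal bundles, which is the actual content of the theorem. Your key structural claim --- that minimality forbids trivial summands in the splitting on a general line, so that the splitting type is $(3)$, $(2,1)$ or $(1,1,1)$ and ``the rank is pinned down'' --- is false, and the counterexamples are in the very list you are trying to establish: $T_{\p^n}(-1)$ is minimal (no trivial summand, $h^0$ and $h^1$ of its dual vanish) yet $T_{\p^n}(-1)|_\ell\cong\oo_\ell(1)\oplus\oo_\ell^{\oplus n-1}$ for \emph{every} line $\ell$; likewise $\Omega_{\p^3}(2)|_\ell\cong\oo_\ell(1)^{\oplus 2}\oplus\oo_\ell$ and $\Omega_{\p^4}(2)|_\ell\cong\oo_\ell(1)^{\oplus 3}\oplus\oo_\ell$. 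Minimality is a global condition and does not control the restriction to a line; global generation only gives splitting types $(3,0,\dots)$, $(2,1,0,\dots)$, $(1,1,1,0,\dots)$ with arbitrarily many zeros, so neither the rank nor the structure of $\ggg$ is determined at this step, and cases (iii), (v), (vi), (vii) of the statement all violate your dichotomy.

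Beyond this error, the decisive steps are acknowledged but not performed: you ``expect'' Horrocks-type or Beilinson-type input to recognize $\Omega_{\p^4}(2)$, to rule out all other indecomposables with $(c_1,c_2)=(3,4)$, and to exclude analogues for $n\geq 5$, but no argument is given, and the assertions organizing your case analysis (e.g.\ that $c_2=1$ cannot occur, that $c_2=2,3$ force split bundles) are themselves unproved. The paper's proof does concrete work at precisely these points: it is organized by the largest twist with $h^0(\e(-j))\neq 0$ ($j=3,2,1$), splits off $\oo_{\p^n}(j)$ by proving the vanishing of ${\ext}^1(\ff,\oo_{\p^n})=H^{n-1}(\ff(-n-1))$ through hyperplane restrictions, feeds the quotient into the $c_1\leq 2$ classification of \cite{su2}, treats the delicate case $h^0(\e(-1))=0$, $h^0(\e_H(-1))\neq 0$ via Riemann--Roch on $\p^3$ (parity of $c_3$), Horrocks' criterion, and the Schwarzenberger conditions (which also kill the would-be extension of $\Omega_{\p^4}(2)$ to $\p^5$), and eliminates the final case ($h^0(\e_H(-1))=0$ for all linear sections down to $\p^2$) by Riemann--Roch on $\p^2$ using $c_2(\mathcal K)=9-c_2\geq 5$. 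None of this is present in, or follows formally from, your outline, so the proposal is a plan rather than a proof, and its one load-bearing claim is incorrect.
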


Theorem \ref{thm:main} immediately implies the following corollary.

\begin{cor} \label{cor:main}
Let $\e$ be a globally generated vector bundle of rank $k$ on
$\p^n$. If $c_1(\e)=3$ then $\e$ is either as in Theorem
\ref{thm:main}, or one of the following holds:
\begin{enumerate}
\item [(i)] $c_2(\e)=5$ and $\e=\Omega_{\p^4}^2(2)^*$;
\item [(ii)] $c_2(\e)=5$ and $\e=T_{\p^3}(-1)\oplus\Omega_{\p^3}(2)$;
\item [(iii)] $c_2(\e)=5$ and $\e=T_{\p^n}(-1)\oplus T_{\p^n}(-1)\oplus\oo_{\p^n}(1)$;
\item [(iv)] $c_2(\e)=6$ and $\e=T_{\p^n}(-1)\oplus T_{\p^n}(-1)\oplus T_{\p^n}(-1)$;
\item [(v)] $c_2(\e)=6$ and $0\rt\oo_{\p^n}(-2)\oplus\Omega_{\p^n}(1)\rt \oo_{\p^n}^{\oplus k+n+1}\rt\e\rt 0$;
\item [(vi)] $c_2(\e)=7$ and $0\rt\oo_{\p^n}(-2)\oplus\oo_{\p^n}(-1)\rt \oo_{\p^n}^{\oplus k+2}\rt\e\rt 0$;
\item [(vii)] $c_2(\e)=9$ and $0\rt\oo_{\p^n}(-3)\rt\oo_{\p^n}^{\oplus k+1}\rt\e\rt 0$;
\item [(viii)] $\e$ is given by $0\to\oo_{\p^n}^{\oplus s}\to\ggg\oplus\oo_{\p^n}^{\oplus r}\to\e\to 0$, where $h^0(\e^*)=r$, $h^1(\e^*)=s$ and $\ggg$ is
a bundle as above.
\end{enumerate}
\end{cor}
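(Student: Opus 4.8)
The plan is to reduce the corollary to Theorem~\ref{thm:main} by a duality on the kernel of the evaluation map, which interchanges $c_2(\e)$ and $9-c_2(\e)$.

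If $c_2(\e)\leq 4$ then $\e$ is already listed in Theorem~\ref{thm:main}, so assume $c_2(\e)\geq 5$. Global generation gives an evaluation sequence $0\rt\ff\rt\oo_{\p^n}^{\oplus N}\rt\e\rt 0$ with $N=h^0(\e)$ and $\ff$ a vector bundle; dualizing produces $0\rt\e^*\rt\oo_{\p^n}^{\oplus N}\rt\ff^*\rt 0$, so that $\ff^*$ is again globally generated. From $c(\ff)c(\e)=1$ one reads off $c_1(\ff^*)=3$ and $c_2(\ff^*)=c_2(\ff)=9-c_2(\e)$. Since the second Chern class of a globally generated (hence nef) bundle is nonnegative, both $c_2(\e)\geq 0$ and $c_2(\ff^*)\geq 0$, so that $0\leq c_2(\e)\leq 9$; this already explains the range occurring in the statement.

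Now for $5\leq c_2(\e)\leq 9$ one has $0\leq c_2(\ff^*)\leq 4$, so Theorem~\ref{thm:main} applies to $\ff^*$. Writing $\ff^*$ as one of the bundles (i)--(viii) there and recovering $\e$ as the dual of the kernel bundle of the evaluation of $\ff^*$ yields the list. Under $c_2(\e)=9-c_2(\ff^*)$ the values $c_2(\ff^*)\in\{0,2,3,4\}$ appearing in the theorem match $c_2(\e)\in\{9,7,6,5\}$; in particular $c_2(\e)=8$ cannot occur, since it would force $c_2(\ff^*)=1$, a value absent from Theorem~\ref{thm:main}. Concretely $\ff^*=\oo_{\p^n}(3)$ gives (vii); $\oo_{\p^n}(2)\oplus\oo_{\p^n}(1)$ gives (vi); $\oo_{\p^n}(2)\oplus T_{\p^n}(-1)$ gives (v); $\oo_{\p^n}(1)^{\oplus 3}$ gives (iv); $\oo_{\p^n}(1)^{\oplus 2}\oplus T_{\p^n}(-1)$ gives (iii); and the two remaining bundles, on $\p^3$ and $\p^4$, with $c_2(\ff^*)=4$ give (ii) and (i). Finally, case (viii) of the theorem corresponds to case (viii) here.

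The main obstacle is the explicit computation of the kernel bundle of the evaluation of $\ff^*$ in the non-split cases (i) and (ii). For $\ff^*=\Omega_{\p^4}(2)$ one uses the second exterior power of the (twisted) dual Euler sequence, namely $0\rt\Omega^2_{\p^n}(2)\rt\oo_{\p^n}^{\oplus\binom{n+1}{2}}\rt\Omega_{\p^n}(2)\rt 0$, which identifies the kernel bundle as $\Omega^2_{\p^4}(2)$ and hence gives $\e=\Omega^2_{\p^4}(2)^*$, as in (i). For $\ff^*=\oo_{\p^3}(1)\oplus\Omega_{\p^3}(2)$ the same sequence together with the isomorphism $\Omega^2_{\p^3}\cong T_{\p^3}(-4)$ identifies the kernel bundle as $\Omega_{\p^3}(1)\oplus T_{\p^3}(-2)$, whose dual is $T_{\p^3}(-1)\oplus\Omega_{\p^3}(2)$, as in (ii); the sequences in (iv)--(vii) come from dualizing the Euler sequences $0\rt\oo_{\p^n}(-1)\rt\oo_{\p^n}^{\oplus n+1}\rt T_{\p^n}(-1)\rt 0$ and $0\rt\Omega_{\p^n}(1)\rt\oo_{\p^n}^{\oplus n+1}\rt\oo_{\p^n}(1)\rt 0$. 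The remaining point is bookkeeping: if $h^0(\e^*)\neq 0$ or $h^1(\e^*)\neq 0$, the above reconstruction is exact only after stripping trivial summands, and since these do not affect $c_2$ the resulting ambiguity is precisely the extension $0\to\oo_{\p^n}^{\oplus s}\to\ggg\oplus\oo_{\p^n}^{\oplus r}\to\e\to 0$ of case (viii), with $r=h^0(\e^*)$ and $s=h^1(\e^*)$.
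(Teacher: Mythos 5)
Your proposal is correct and takes essentially the same route as the paper: the duality $c_2\mapsto 9-c_2$ via the dual of the evaluation kernel is exactly Lemma \ref{lem:2nd reduction}, the trivial-summand bookkeeping is Lemma \ref{lem:1st reduction}, and the paper's proof of the corollary is precisely the one-line combination of these with Theorem \ref{thm:main}, while you additionally spell out the explicit Euler/Koszul identifications of the kernels that the paper leaves implicit. One cosmetic remark: case (viii) of the corollary arises from $h^0(\e^*)\neq 0$ or $h^1(\e^*)\neq 0$ for $\e$ itself (as your final sentence correctly says), not from case (viii) of the theorem applied to $\ff^*$, since the evaluation kernel automatically satisfies $h^0(\ff)=h^1(\ff)=0$.
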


This note is a natural extension of \cite{su2}. Therefore, we also want to thank the referee of that paper for his help.

Globally generated vector bundles $\e$ on $\p^n$ with $c_1(\e)=3$ have also been studied independently, and using a different approach, in \cite{man} and \cite{a-m} (cf. Remark \ref{rem:hidden}).

\section{Proof of Theorem \ref{thm:main}}\label{section:proof}

We work over the field of complex numbers. Let $\e$ be a globally
generated vector bundle on $\p^n$ of rank $k$, and let $\e^*$ denote
its dual bundle. In view of the following result, we will assume
throughout the paper that $h^0(\e^*)=h^1(\e^*)=0$.

\begin{lem}[First reduction]\label{lem:1st reduction}
Let $\e$ be a globally generated vector bundle on $\p^n$. If
$h^0(\e^*)=r$ and $h^1(\e^*)=s$, then there exists a globally
generated vector bundle $\ggg$ such that
$h^0(\ggg^*)=h^1(\ggg^*)=0$, and an exact sequence $$0\to\oo^{\oplus
s}_{\p^n}\to\ggg\oplus\oo_{\p^n}^{\oplus r}\to\e\to 0.$$
\end{lem}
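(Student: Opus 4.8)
The plan is to analyze the cohomology of the dual bundle $\e^*$ and use the vanishing/non-vanishing of $h^0$ and $h^1$ to build up $\e$ from a bundle $\ggg$ with the stronger vanishing property. The key idea is that $h^0(\e^*) = r$ means $\e^*$ has $r$ independent global sections, equivalently there are $r$ independent maps $\e \to \oo_{\p^n}$, i.e. a surjection—or at least a map—$\e \to \oo_{\p^n}^{\oplus r}$. Since $\e$ is globally generated, a generic such map should be surjective, producing trivial quotients that split off.

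**First I would** handle the $h^0$ part. A section of $\e^*$ is a morphism $\e \to \oo_{\p^n}$; having $r = h^0(\e^*)$ independent sections gives an evaluation map $\e \to \oo_{\p^n}^{\oplus r}$. The goal is to show this map is surjective and that its kernel $\ggg$ is again globally generated with $c_1(\ggg) = c_1(\e) = 3$. Surjectivity should follow because global generation of $\e$ (hence of $\e^*$'s "dual picture") forces the evaluation to be surjective at every point; one checks fiberwise that the $r$ sections cannot all vanish or become dependent at a point without contradicting the definition of $r = h^0(\e^*)$ as the full space of sections. This yields
\[
0 \to \ggg \to \e \to \oo_{\p^n}^{\oplus r} \to 0,
\]
and since $\oo_{\p^n}^{\oplus r}$ is trivial this sequence splits, giving $\e \cong \ggg \oplus \oo_{\p^n}^{\oplus r}$ with $h^0(\ggg^*) = 0$.

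**Next I would** deal with the $h^1$ part, which accounts for the $\oo_{\p^n}^{\oplus s}$ term. After splitting off the trivial summands, one has a globally generated $\ggg'$ with $h^0(\ggg'^*) = 0$ but possibly $h^1(\ggg'^*) = s \neq 0$. The strategy is to realize these $s$ classes in $\hh^1(\ggg'^*)$ as extension data: a nonzero class in $\ext^1(\ggg', \oo_{\p^n}) = \hh^1(\ggg'^*)$ corresponds to a nontrivial extension $0 \to \oo_{\p^n} \to \ggg'' \to \ggg' \to 0$. Dualizing and taking cohomology, I would check that such an extension can be chosen so that the middle term $\ggg''$ is still globally generated and has strictly smaller $h^1$ of its dual, or alternatively build the full sequence at once using all $s$ extension classes to obtain $0 \to \oo_{\p^n}^{\oplus s} \to \ggg \oplus \oo_{\p^n}^{\oplus r} \to \e \to 0$ with $h^0(\ggg^*) = h^1(\ggg^*) = 0$.

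**The hard part will be** controlling the cohomology after each extraction—specifically, verifying that the resulting kernel/extension bundle $\ggg$ still satisfies \emph{both} $h^0(\ggg^*) = 0$ \emph{and} $h^1(\ggg^*) = 0$ simultaneously, rather than trading one nonvanishing for another. I expect the clean route is to construct the two-step filtration in one stroke: first split off $\oo_{\p^n}^{\oplus r}$ as above, then use the long exact sequence in cohomology associated to $0 \to \oo_{\p^n}^{\oplus s} \to \ggg \to \ggg' \to 0$ (with the connecting map $\hh^0(\ggg'^*) \to \hh^1((\oo_{\p^n}^{\oplus s})^*) = \hh^1(\oo_{\p^n}^{\oplus s})$, which vanishes) to force the desired vanishing on $\ggg$, while simultaneously checking that global generation is inherited by $\ggg$—the latter typically follows from global generation of $\ggg'$ together with the triviality of the sub, via a diagram chase on the evaluation maps.
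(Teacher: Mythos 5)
Your two-step plan (split off $\oo_{\p^n}^{\oplus r}$ using the sections of $\e^*$, then absorb $h^1(\e^*)$ via an extension with trivial subbundle) is exactly the skeleton of the paper's proof, which simply cites \cite[Lemmas 3 and 4]{su2} for these two steps. However, two of your key justifications fail as stated. First, the claim that $0\to\ggg\to\e\to\oo_{\p^n}^{\oplus r}\to 0$ ``splits since $\oo_{\p^n}^{\oplus r}$ is trivial'' is not a valid inference: such extensions are classified by $H^1(\ggg)^{\oplus r}$, which has no reason to vanish. The splitting does hold, but because $\e$ is globally generated: the composite $H^0(\e)\otimes\oo_{\p^n}\to\e\to\oo_{\p^n}^{\oplus r}$ is a surjection of trivial bundles, hence given by a constant matrix of rank $r$, and a constant right inverse of that matrix gives a section of $\e\to\oo_{\p^n}^{\oplus r}$. (Similarly, fiberwise surjectivity of the evaluation map is not a consequence of ``$r$ being the full space of sections''; it holds because the image of any nonzero $\phi\in H^0(\e^*)$, being a quotient of the globally generated $\e$, is a globally generated ideal sheaf and hence equals $\oo_{\p^n}$, so no nonzero section of $\e^*$ can vanish at any point.)

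Second, and more seriously, your mechanism for obtaining $h^0(\ggg^*)=h^1(\ggg^*)=0$ in the extension step is backwards. The sequence to analyze is the dual $0\to\ggg'^*\to\ggg^*\to\oo_{\p^n}^{\oplus s}\to 0$, whose connecting homomorphism $\delta\colon H^0(\oo_{\p^n}^{\oplus s})=\cc^s\to H^1(\ggg'^*)$ is given (up to sign) by the chosen extension class in ${\ext}^1(\ggg',\oo_{\p^n}^{\oplus s})\cong H^1(\ggg'^*)\otimes\cc^s$. The desired conclusion requires $\delta$ to be an \emph{isomorphism}, which holds precisely when the $s$ extension classes are chosen to form a basis of $H^1(\ggg'^*)$ (the universal extension); an arbitrary choice---say $s$ copies of a single class---would leave $h^1(\ggg^*)\neq 0$, so the trade-off you worry about is real and is resolved only by this specific choice. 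Your appeal to a connecting map ``$H^0(\ggg'^*)\to H^1(\oo_{\p^n}^{\oplus s})$, which vanishes'' refers to a map that does not occur in the relevant long exact sequence, and in any case a vanishing map cannot force the needed surjectivity onto $H^1(\ggg'^*)$. By contrast, your diagram-chase argument that $\ggg$ inherits global generation (using $H^1(\oo_{\p^n}^{\oplus s})=0$) is sound.
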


\begin{proof}
Just put together \cite[Lemmas 3 and 4]{su2}.
\end{proof}

Let $c_1:=c_1(\e)$ and $c_2:=c_2(\e)$ denote the first and second
Chern class of $\e$, respectively. We point out that $c_1^2-c_2\geq 0$ since $\e$ is globally generated. Furthermore, in order to classify
globally generated vector bundles one can assume $c_2\leq
\frac{c_1^2}{2}$ thanks to the following.

\begin{lem}[Second reduction]\label{lem:2nd reduction}
Let $\e$ be a globally generated vector bundle with Chern classes
$c_1, c_2$. If $c_2>\frac{c_1^2}{2}$ then there exists a globally
generated vector bundle $\mathcal K^*$, whose dual $\mathcal K$ is
given by the exact sequence
$$0\to\mathcal K\to\oo_{\p^n}^{\oplus h^0(\e)}\to\e\to 0.$$ In particular, $c_1(\mathcal
K^*)=c_1$ and $c_2(\mathcal K^*)=c_1^2-c_2<\frac{c_1^2}{2}$.
\end{lem}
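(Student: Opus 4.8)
The plan is to produce $\mathcal K$ directly from the global generation of $\e$ and then obtain $\mathcal K^*$ by dualizing. First I would invoke the hypothesis that $\e$ is globally generated: the evaluation morphism $\oo_{\p^n}^{\oplus h^0(\e)}\to\e$ is then surjective. Since the cokernel $\e$ is locally free, the kernel $\mathcal K$ of this surjection is itself a subbundle, and we obtain precisely the asserted sequence
$$0\to\mathcal K\to\oo_{\p^n}^{\oplus h^0(\e)}\to\e\to 0.$$
In particular $\mathcal K$, and hence its dual $\mathcal K^*$, is a well-defined vector bundle.

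The key step is to dualize this sequence. As all three terms are locally free, the dual sequence stays exact and reads
$$0\to\e^*\to\oo_{\p^n}^{\oplus h^0(\e)}\to\mathcal K^*\to 0,$$
so that $\mathcal K^*$ is realized as a quotient of a trivial bundle. Since any quotient of a globally generated bundle is globally generated, $\mathcal K^*$ is globally generated; this is the substance of the statement.

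It then remains to record the Chern classes. Applying the Whitney formula to the first sequence gives $c(\mathcal K)\,c(\e)=1$, whence $c_1(\mathcal K)=-c_1$ and $c_2(\mathcal K)=c_1^2-c_2$; passing to the dual flips the odd Chern classes and fixes the even ones, yielding $c_1(\mathcal K^*)=c_1$ and $c_2(\mathcal K^*)=c_1^2-c_2$. Here is where the hypothesis finally enters: the assumption $c_2>\frac{c_1^2}{2}$ is exactly the inequality $c_1^2-c_2<\frac{c_1^2}{2}$, so $c_2(\mathcal K^*)<\frac{c_1^2}{2}$ as required.

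The argument has no genuinely hard step; its whole point is the dualization trick that turns the kernel's dual into a quotient of the free bundle, which makes global generation automatic and simultaneously trades the forbidden range $c_2>\frac{c_1^2}{2}$ for the admissible one. The only thing demanding a little care is confirming that $\mathcal K$ is locally free and that dualizing preserves exactness, both of which rest on the standard fact that the kernel of a surjection of vector bundles with locally free cokernel is again a vector bundle.
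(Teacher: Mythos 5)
Your proposal is correct and follows exactly the same route as the paper, whose entire proof is the one-line instruction to consider the kernel $\mathcal K$ of the evaluation epimorphism $\oo_{\p^n}^{\oplus h^0(\e)}\to\e$; you have simply supplied the standard details (local freeness of the kernel, exactness of the dualized sequence, global generation of the quotient $\mathcal K^*$, and the Whitney-formula computation of the Chern classes) that the authors left implicit.
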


\begin{proof}
Consider the kernel $\mathcal K$ of the epimorphism
$\oo_{\p^n}^{\oplus h^0(\e)}\to \e\to 0$.
\end{proof}

Globally generated vector bundles with $c_1\leq 2$ were classified
in \cite{su2}. From now on we concentrate on the case $c_1=3$. We
start by considering the cases in which $\e$ admits a global section
whose zero locus is a hypersurface in $\p^n$.

\begin{prop}\label{prop:-3}
If $h^0(\e(-3))\neq 0$ then $\e=\oo_{\p^n}(3)$. Moreover, if
$h^0(\e(-3))=0$ then $h^0(\e_K(-3))=0$ for every linear subspace
$K\subset\p^n$ of dimension greater than 1.
\end{prop}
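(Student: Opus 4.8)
\emph{First part.} A nonzero element of $H^0(\e(-3))$ is the same as a nonzero morphism $\oo_{\p^n}(3)\to\e$, and since $\p^n$ is integral and $\oo_{\p^n}(3)$ is a line bundle this morphism is injective as a map of sheaves. So I obtain an exact sequence
\[
0\to\oo_{\p^n}(3)\to\e\to\q\to 0,
\]
where $\q$ is a globally generated coherent sheaf of rank $k-1$ with $c_1(\q)=c_1(\e)-3=0$. First I would check that $\q$ is torsion-free. Restricting to a general line $\ell$, global generation gives $\e_\ell=\bigoplus_{i=1}^k\oo_\ell(a_i)$ with $a_i\geq 0$ and $\sum a_i=3$, and the section of $\e(-3)$ restricts to a nonzero section of $\bigoplus_i\oo_\ell(a_i-3)$; as $a_i\leq 3$ this forces the general splitting type to be $(3,0,\dots,0)$. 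If $\oo_{\p^n}(a)$, $a\geq 3$, denotes the saturation of $\oo_{\p^n}(3)$ in $\e$, then the torsion-free quotient $\e/\oo_{\p^n}(a)$ restricted to $\ell$ has $c_1=3-a\geq 0$, whence $a\leq 3$; thus $a=3$ and $\q$ is already torsion-free.

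\emph{Conclusion of the first part.} Now $\q$ is torsion-free, globally generated, with $c_1(\q)=0$, and restricts to $\oo_\ell^{\oplus(k-1)}$ on the general line, so it is the trivial bundle $\oo_{\p^n}^{\oplus(k-1)}$. Since this is locally free the sequence splits, giving $\e\cong\oo_{\p^n}(3)\oplus\oo_{\p^n}^{\oplus(k-1)}$; dualizing yields $h^0(\e^*)\geq k-1$, and the standing hypothesis $h^0(\e^*)=0$ forces $k=1$, i.e. $\e=\oo_{\p^n}(3)$. Note that $h^0(\e^*)=0$ was used only in this last step: running the identical argument for the restriction $\e_K$ to \emph{any} linear subspace $K$ on which $H^0(\e_K(-3))\neq 0$ produces a splitting $\e_K\cong\oo_K(3)\oplus\oo_K^{\oplus(k-1)}$.

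\emph{Moreover part.} Assuming $h^0(\e(-3))=0$, it suffices to prove that for a hyperplane $K\subset K'$ inside any linear $K'$ with $\dim K'\geq 3$ one has $H^0(\e_{K'}(-3))=0\Rightarrow H^0(\e_K(-3))=0$, and then to descend from $\p^n$ to the given $K$ (stopping at $\dim K=2$). The restriction sequence $0\to\e_{K'}(-4)\to\e_{K'}(-3)\to\e_K(-3)\to 0$ gives an injection $H^0(\e_K(-3))\hookrightarrow H^1(\e_{K'}(-4))$, so the task is to kill this $H^1$. For this I would use the presentation $0\to\mathcal K\to\oo_{K'}^{\oplus h^0(\e_{K'})}\to\e_{K'}\to 0$ coming from Lemma \ref{lem:2nd reduction}: twisting by $\oo_{K'}(-4)$ and using the line-bundle vanishings $H^i(\oo_{K'}(-4))=0$ for $0<i<\dim K'$ — which hold precisely because $\dim K'\geq 3$ — yields $H^1(\e_{K'}(-4))\cong H^2(\mathcal K(-4))$. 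This is exactly where the hypothesis $\dim K>1$ (equivalently $\dim K'\geq 3$ throughout the descent) is essential, since for $\dim K'=2$ the group $H^2(\oo_{K'}(-4))$ no longer vanishes.

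\emph{Main obstacle.} The first part is routine once the general-line splitting type is pinned down; the real work is the moreover part, namely the vanishing $H^2(\mathcal K(-4))=0$ for the syzygy sheaf $\mathcal K$ of $\e_{K'}$. I expect this to be the delicate point: one must extract it from the structure of $\mathcal K$ (its own globally generated presentation, or the second Chern class bound $c_1^2-c_2\geq 0$), feeding in the splitting $\e_K\cong\oo_K(3)\oplus\oo_K^{\oplus(k-1)}$ on the hyperplane and the standing vanishings $h^0(\e^*)=h^1(\e^*)=0$. Controlling this intermediate cohomology uniformly along the chain $K\subset\cdots\subset\p^n$, rather than letting $H^1$ grow under the negative twist, is the crux of the argument.
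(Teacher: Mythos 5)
Your proof of the first assertion is correct and self-contained: the general splitting type $(3,0,\dots,0)$ pins down the saturation, the quotient $\q$ is then torsion-free and globally generated with $c_1(\q)=0$, hence trivial, the extension splits because ${\ext}^1(\oo_{\p^n}^{\oplus k-1},\oo_{\p^n}(3))=H^1(\oo_{\p^n}(3))^{\oplus k-1}=0$, and the standing hypothesis $h^0(\e^*)=0$ forces $k=1$. (The paper simply quotes \cite[Lemma 5]{su2} here, so your argument in effect reproves that lemma.) Your parenthetical observation that the same argument, run on any linear subspace $K$ with $h^0(\e_K(-3))\neq 0$, yields $\e_K\cong\oo_K(3)\oplus\oo_K^{\oplus(k-1)}$ is also correct, and it is exactly the intermediate fact the paper extracts from the first assertion together with Lemma \ref{lem:1st reduction}.

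The ``moreover'' part, however, is not proved: your descent along hyperplanes requires $H^1(\e_{K'}(-4))=0$, equivalently $H^2(\mathcal K(-4))=0$ for the syzygy sheaf, and you explicitly leave this as the ``main obstacle''. This is a genuine gap, not a routine verification. On the bottom rung of your descent, $\dim K'=3$, Serre duality gives $H^2(\mathcal K(-4))\cong H^1(\mathcal K^*)^*\cong H^2(\e_{K'}^*)^*$, and nothing in the standing hypotheses (which control only $h^0(\e^*)$ and $h^1(\e^*)$ on $\p^n$) bounds $h^2$ of the dual of a restriction of $\e$; in practice such vanishings can only be checked a posteriori, from the very classification one is trying to prove. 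The fix is to reverse the direction of the argument, as the paper does: argue by contraposition and climb up rather than down. If $h^0(\e_K(-3))\neq 0$, then by your own first part $\e_K\cong\oo_K(3)\oplus\oo_K^{\oplus(k-1)}$ is a direct sum of line bundles; since $\dim K\geq 2$, the splitting criterion of \cite[Ch.~I, Theorem 2.3.2]{oko} (a vector bundle splits as soon as its restriction to a linear subspace of dimension at least $2$ splits), applied along a flag $K\subset K_1\subset\dots\subset\p^n$, shows that $\e$ itself splits, necessarily as $\oo_{\p^n}(3)\oplus\oo_{\p^n}^{\oplus(k-1)}$ because restriction to $K$ preserves the degrees of the line-bundle summands; hence $h^0(\e(-3))\neq 0$, contradicting the hypothesis. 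This route needs no cohomological vanishing at all, and it is also where the assumption $\dim K>1$ genuinely enters (the criterion fails for $K$ a line, where every bundle splits).
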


\begin{proof}
The first statement was shown in \cite[Lemma 5]{su2}. On the other
hand, if $h^0(\e_K(-3))\neq 0$ then
$\e_K=\oo_K(3)\oplus\oo_K^{\oplus k-1}$ by the first assertion and
Lemma \ref{lem:1st reduction}. Therefore
$\e=\oo_{\p^n}(3)\oplus\oo_{\p^n}^{\oplus k-1}$ (see for instance \cite[Ch.~I, Theorem 2.3.2]{oko}), whence $h^0(\e(-3))\neq 0$.
\end{proof}

\begin{prop}\label{prop:-2}
Assume that $h^0(\e(-3))=0$.
\begin{enumerate}
\item[(i)] If $h^0(\e(-2))\neq 0$ then either
$\e=\oo_{\p^n}(2)\oplus\oo_{\p^n}(1)$, or $\e=\oo_{\p^n}(2)\oplus
T_{\p^n}(-1)$.

\item[(ii)] If $h^0(\e(-2))=0$ then $h^0(\e_K(-2))=0$ for every linear subspace
$K\subset\p^n$ of dimension greater than 1.
\end{enumerate}
\end{prop}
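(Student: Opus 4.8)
The plan is to handle the two parts separately, with part (i) furnishing the geometric input for the restriction argument in part (ii). For part (i), I start from a nonzero section $s\in H^0(\e(-2))$, i.e.\ a map $\oo_{\p^n}(2)\to\e$. First I would check that the zero locus $Z(s)$ has codimension at least $2$: a divisorial component of degree $d\geq 1$ would let one factor out its equation and produce a nonzero section of $\e(-2-d)$, but multiplying by a general linear form shows that $h^0(\e(-3))=0$ propagates to $h^0(\e(-j))=0$ for all $j\geq 3$, a contradiction. Hence $\oo_{\p^n}(2)\hookrightarrow\e$ drops rank only in codimension $\geq 2$, so in
$$0\to\oo_{\p^n}(2)\to\e\to\q\to 0$$
the quotient $\q$ is torsion-free and globally generated (being a quotient of $\e$), of rank $k-1$ with $c_1(\q)=1$ and $c_2(\q)=c_2-2$. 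Since $c_1(\q)^2\geq c_2(\q)$ for globally generated sheaves, $c_2(\q)\in\{0,1\}$.

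Granting for the moment that $\q$ is locally free, I would identify it using the classification of globally generated bundles with $c_1=1$ from \cite{su2}: after the first reduction these are $\oo_{\p^n}(1)$ and $T_{\p^n}(-1)$, so $\q\cong\oo_{\p^n}(1)\oplus\oo_{\p^n}^{\oplus a}$ or $\q\cong T_{\p^n}(-1)\oplus\oo_{\p^n}^{\oplus a}$. Next I show the displayed sequence splits by checking $\ext^1(\q,\oo_{\p^n}(2))=0$: this reduces to $H^1(\oo_{\p^n}(1))=H^1(\oo_{\p^n}(2))=0$ and, in the second case, to $\ext^1(T_{\p^n},\oo_{\p^n}(3))=H^1(\Omega_{\p^n}(3))=0$, all of which follow from Bott vanishing. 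Thus $\e\cong\oo_{\p^n}(2)\oplus\q$, and the standing assumption $h^0(\e^*)=0$ forces $a=0$ (a trivial summand of $\e$ would contribute to $h^0(\e^*)$, while $h^0(\Omega_{\p^n}(1))=0$ disposes of the $T_{\p^n}(-1)$ case). This leaves exactly $\e=\oo_{\p^n}(2)\oplus\oo_{\p^n}(1)$ and $\e=\oo_{\p^n}(2)\oplus T_{\p^n}(-1)$.

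For part (ii), I argue by contradiction. If $h^0(\e_K(-2))\neq 0$ for some linear $K$ with $\dim K>1$, then $h^0(\e_K(-3))=0$ by Proposition \ref{prop:-3}, so part (i), applied to $\e_K$ on $K$ after the first reduction, shows that $\oo_K(2)$ is a direct summand of $\e_K$. The splitting criterion \cite[Ch.~I, Theorem 2.3.2]{oko} then lifts this summand to all of $\p^n$, exactly as in the proof of Proposition \ref{prop:-3}, giving $\oo_{\p^n}(2)$ as a summand of $\e$ and hence $h^0(\e(-2))\neq 0$, contradicting the hypothesis.

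The main obstacle is the point I set aside: establishing that $\q$ is locally free, so that the $c_1=1$ classification applies verbatim. I would resolve this by restricting to a general line $\ell$, which avoids $Z(s)$, so that $\oo_\ell(2)\hookrightarrow\e_\ell$ is a subbundle inclusion and $\q_\ell$ is locally free with generic splitting type $\oo_\ell(1)\oplus\oo_\ell^{\oplus k-2}$; together with $c_2(\q)\leq 1$ and global generation this pins $\q$ down as a bundle of the stated form. Equivalently, the Ext vanishing above can be carried out on the reflexive hull $\q^{**}$, after which $\q$ is recovered as a direct summand of $\e$ and is therefore automatically locally free; I expect the careful verification that these two routes agree to be the only genuinely delicate step.
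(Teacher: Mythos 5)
Your part (i) follows the same skeleton as the paper's proof (nonzero section of $\e(-2)$, zero locus of codimension at least $2$, torsion-free quotient $\q$ with $c_1(\q)=1$, classify, split), but the step you defer --- local freeness of $\q$, equivalently the splitting of $0\to\oo_{\p^n}(2)\to\e\to\q\to 0$ --- is not a removable technicality: it is where the entire content of the proposition lies, and neither of your two proposed fixes can work, because neither uses the standing hypothesis $h^1(\e^*)=0$, without which the statement is \emph{false}. Concretely, on $\p^2$ let $t\in H^0(T_{\p^2}(-1))$ be a general section (vanishing at a single point $p$), let $q$ be a conic with $q(p)\neq 0$, and let $\e$ be the cokernel of $(q,t)\colon\oo_{\p^2}\to\oo_{\p^2}(2)\oplus T_{\p^2}(-1)$. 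Then $\e$ is a globally generated rank-$2$ bundle with $c_1=3$, $c_2=3$, $h^0(\e^*)=h^0(\e(-3))=0$, and the composite $\oo_{\p^2}(2)\to\e$ shows $h^0(\e(-2))\neq 0$ with quotient $\q=\e/\oo_{\p^2}(2)\cong I_p(1)$, torsion-free but not locally free; yet $\e$ is neither $\oo_{\p^2}(2)\oplus\oo_{\p^2}(1)$ (wrong $c_2$) nor $\oo_{\p^2}(2)\oplus T_{\p^2}(-1)$ (every nonzero map $\oo_{\p^2}(2)\to\oo_{\p^2}(2)\oplus T_{\p^2}(-1)$ has locally free cokernel, since $H^0(T_{\p^2}(-3))=H^0(\Omega_{\p^2})=0$). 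This is consistent with the paper only because $h^1(\e^*)=1$, i.e.\ this $\e$ lives in case (viii) of Theorem \ref{thm:main}. Since this example satisfies every fact you actually establish, your argument cannot be completed as proposed: (a) $I_p(1)$ has the same generic splitting type, Chern classes and global generation as $\oo_{\p^2}(1)$, so restriction to general lines pins down nothing; (b) the reflexive-hull route fails because ${\ext}^1(\q^{**},\oo_{\p^2}(2))=H^1(\oo_{\p^2}(1))=0$ while ${\ext}^1(I_p(1),\oo_{\p^2}(2))\cong\cc$, generated precisely by the class of $\e$; the discrepancy is the local term ${\ext}^2(\oo_p,\oo_{\p^2}(2))\neq 0$, which your comparison of the two routes ignores. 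The paper closes exactly this gap differently: it proves the zero scheme $Z$ is finite of length at most $1$ (a geometric argument with lines and planes resting on Proposition \ref{prop:-3}), restricts to a hyperplane $H$ avoiding $Z$ so that $\ff_H$ is a bundle that can be classified, and then proves the key vanishing ${\ext}^1(\ff,\oo_{\p^n})=H^{n-1}(\ff(-n-1))=0$ for the possibly singular sheaf $\ff$ via the restriction sequence and $h^{n-1}(\ff(-n))=h^1(\e^*(1))=0$ --- and this last vanishing is where $h^1(\e^*)=0$ enters indispensably.

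Part (ii) contains errors independent of the above. The first reduction presents $\e_K$ as a \emph{quotient} $0\to\oo_K^{\oplus s}\to\ggg\oplus\oo_K^{\oplus r}\to\e_K\to 0$, not as a direct sum, so it does not follow that $\oo_K(2)$ is a direct summand of $\e_K$; indeed the bundle $\e$ above is exactly such a quotient (with $\ggg=\oo_{\p^2}(2)\oplus T_{\p^2}(-1)$, $s=1$, $r=0$) in which $\oo_{\p^2}(2)$ is not a summand. Moreover, \cite[Ch.~I, Theorem 2.3.2]{oko} is a criterion for a bundle to split completely into line bundles: it applied in Proposition \ref{prop:-3} because there $\e_K\cong\oo_K(3)\oplus\oo_K^{\oplus k-1}$, but it cannot lift a single summand when $\e_K$ involves $T_K(-1)$ or a nontrivial quotient. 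The paper's actual proof of (ii) is cohomological: for $n\geq 4$ one has $h^i(\e_H(-j))=0$ for $i=0,1$ and all $j\geq 3$, so the restriction sequence and Serre's vanishing theorem give $h^0(\e(-2))=h^0(\e_H(-2))$; for $n=3$ this vanishing fails (the $\oo_H^{\oplus s}$ from the first reduction contributes $h^2(\oo_{\p^2}(-j))\neq 0$), and the paper needs a separate argument via Hirzebruch--Riemann--Roch and the parity of $c_3$. Your proposal engages with neither the quotient issue nor the $n=3$ case.
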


\begin{proof}
To prove (i), we essentially argue as in \cite[Proposition
3.2]{su2}. If $n=1$ the result is trivial, so we assume that $n\geq 2$.
Let $s\in H^0(\e(-2))$ be a non-zero section. Consider the
corresponding exact sequence of sheaves
$$0\to\oo_{\p^n}\to\e(-2)\to\ff\to 0,$$ and let $Z\subset\p^n$ be the
zero locus of $s$. We claim that $Z$ is a finite scheme of length at
most 1. To get a contradiction, let $P,Q$ be two points (maybe
infinitely close) where $s$ vanishes and let $L\subset\p^n$ be the
line joining $P$ and $Q$. Restricting to $L$ and twisting, we get
$$0\to\oo_L(2)\to\e_L\to\ff_L(2)\to 0.$$ Since $\e$ is globally
generated and $\ff_L(2)$ is a quotient of $\e_L$, we deduce that
$\ff_L(2)$ is also globally generated. Furthermore
$$3=c_1(\e_L)=c_1(\oo_L(2))+c_1(\ff_L(2))=2+c_1(\ff_L(2))$$ and $P,Q\in Z$, so $s$
vanishes on $L$. Let $\p^2\subset\p^n$ be a general plane containing
$L$. Then $s$ does not vanish identically on $\p^2$ as otherwise
$s\in H^0(\e(-2))$ would be the zero section. Let $V\subset\p^n$ be
the hypersurface of degree $2$ where $s$ vanishes (consi\-dered as a
section of $\e$). Then $s$ vanishes on $L$ and $V\cap\p^2$, whence
$h^0(\e_{\p^2}(-3))\neq 0$ contradicting Proposition \ref{prop:-3}.
This proves the claim. Consider the restriction sequence
$$0\to\oo_H\to\e_H(-2)\to\ff_H\to 0$$ to a hyperplane
$H\subset\p^n$ not meeting $Z$. Then $\ff_H$ is a vector bundle such
that $\ff_H(2)$ is globally generated and $c_1(\ff_H(2))=1$.
Therefore $\ff_H(2)$ is either $\oo_H(1)\oplus\oo_H^{\oplus k-2}$ or
$T_H(-1)\oplus\oo_H^{\oplus k-n}$ by \cite[Proposition 3.1]{su2}. As
$$\e_H(-2)\in{\ext}^1(\ff_H,\oo_H)=H^{n-2}(\ff_H(-n))=0,$$ we deduce that
$\e_H$ is either $\oo_H(2)\oplus\oo_H(1)\oplus\oo_H^{\oplus k-2}$ or $\oo_H(2)\oplus T_H(-1)\oplus\oo_H^{\oplus k-n}$.
We claim that $h^{n-1}(\ff(-n-1))=0$. Assume that the claim is proved. Then
$$\e(-2)\in{\ext}^1(\ff,\oo_{\p^n})=H^{n-1}(\ff(-n-1))=0,$$ whence
$\e(-2)=\oo_{\p^n}\oplus\ff$ and $\ff$ is a vector bundle such that
$\ff(2)$ is globally generated and $c_1(\ff(2))=1$, so $\e$ is
either $\oo_{\p^n}(2)\oplus\oo_{\p^n}(1)$ or $\oo_{\p^n}(2)\oplus
T_{\p^n}(-1)$ by \cite[Proposition 3.1]{su2} and Lemma \ref{lem:1st
reduction}. Let us prove the claim. Consider the restriction sequence
$$0\to\e^*\to\e^*(1)\to\e_H^*(1)\to 0.$$ Since $h^1(\e^*)=0$ by
assumption and $h^1(\e_H^*(1))=0$, we
get $h^1(\e^*(1))=0$. Now consider the restriction sequence $$0\to\ff(-n-1)\to\ff(-n)\to\ff_H(-n)\to 0.$$
As $h^{n-2}(\ff_H(-n))=0$ and $h^{n-1}(\ff(-n))=h^{n-1}(\e(-n-2))=h^1(\e^*(1))=0$, we deduce that $h^{n-1}(\ff(-n-1))=0$.

We now prove (ii). It suffices to show it for every hyperplane
$H\subset\p^n$. If $h^0(\e_H(-2))\neq 0$ for some hyperplane
$H\subset\p^n$, we deduce from (i) and Lemma \ref{lem:1st reduction}
that either $\e_H=\oo_H(2)\oplus\oo_H(1)\oplus\oo_H^{\oplus k-2}$,
or $\e_H$ fits in an exact sequence
$$0\to\oo_H^{\oplus s}\to\oo_H(2)\oplus T_H(-1)\oplus\oo_H^{\oplus k+s-n}\to \e_H\to 0$$
Assume first that $n\geq 4$. Then $h^i(\e_H(-j))=0$ for $i=0,1$ and every $j\geq 3$. Consider the restriction sequence
$0\to\e(-j-1)\to\e(-j)\to\e_H(-j)\to 0$. We deduce from Serre's
vanishing theorem that $h^0(\e(-3))=h^1(\e(-3))=0$, whence
$h^0(\e(-2))=h^0(\e_H(-2))\neq 0$. Now, assume that $n=3$.
The Hirzebruch-Riemann-Roch theorem yields $\chi(\e)=\frac{1}{6}(c_1^3-3c_1c_2+3c_3)+c_1^2-2c_2+\frac{22}{12}c_1+k$.
Since $c_1=c_2=3$ we deduce that $\chi(\e)=8+k+\frac{1}{2}(c_3+1)$. To get a contradiction, assume that $h^0(\e(-2))=0$.
Then the restriction sequence gives $h^0(\e(-1))\leq 3$ and $h^0(\e)\leq 9+k$. We deduce that
$h^3(\e)=h^0(\e^*(-4))=0$ and $h^2(\e)=h^1(\e^*(-4))=0$ from Serre duality and the exact sequence
$0\to\e^*(-j-1)\to\e^*(-j)\to\e_H^*(-j)\to 0$. Hence $-h^1(\e)\geq (c_3-1)/2$, that is, $c_3=1$ since $c_3$
is odd (see, for instance, \cite[p. 113]{oko}). Therefore $\e=\oo_{\p^3}(1)^{\oplus 3}$, and we get a contradiction.
\end{proof}

\begin{rem}
We would like to thank Edoardo Ballico for pointing out the following gap in the proof of \cite[Proposition 3.2]{su2}. The natural isomorphism between $H^{n-1}(\ff(-n-1))$ and the dual of $H^1(\ff^*)$ holds if the quotient $\ff$ is a locally free sheaf, so we just have $\e(-1)\in{\ext}^1(\ff,\oo_{\p^n})=H^{n-1}(\ff(-n-1))$. In order to show that $h^{n-1}(\ff(-n-1))=0$, and hence $\e(-1)=\ff\oplus\oo_{\p^n}$, just note that $h^{n-1}(\ff(-n))=h^{n-1}(\e(-n-1))=h^1(\e^*)=0$ and that $h^{n-2}(\ff_H(-n))=0$ (cf. Lemma \ref{lem:F bundle} below).
\end{rem}

The cases $h^0(\e(-3))\neq 0$ and $h^0(\e(-2))\neq 0$ were described
in Propositions \ref{prop:-3} and \ref{prop:-2}, respectively. Now we study in detail the case $h^0(\e(-1))\neq 0$. 

\begin{lem}\label{lem:F bundle}
Let $s\in H^0(\e(-1))$ be a non-zero section, and let
$0\to\oo_{\p^n}\to\e(-1)\to\ff\to 0$ be the corresponding exact
sequence of sheaves. If $h^{n-2}(\ff_H(-n))=0$ for some hyperplane
$H\subset\p^n$ then $\e=\oo_{\p^n}(1)\oplus\ff(1)$. In particular,
$\ff(1)$ is a globally generated vector bundle with $c_1(\ff(1))=2$.
\end{lem}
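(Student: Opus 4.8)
The plan is to prove the lemma by showing that the defining sequence $0\to\oo_{\p^n}\to\e(-1)\to\ff\to 0$ splits; twisting the splitting by $\oo_{\p^n}(1)$ then yields $\e=\oo_{\p^n}(1)\oplus\ff(1)$. Once this is known, the final assertions are immediate: a direct summand of the vector bundle $\e$ is again a vector bundle, so $\ff(1)$ is locally free; it is globally generated since it is a quotient of the globally generated bundle $\e$; and $c_1(\ff(1))=c_1(\e)-c_1(\oo_{\p^n}(1))=3-1=2$.

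The splitting is governed by the extension class of $0\to\oo_{\p^n}\to\e(-1)\to\ff\to 0$, which lives in $\ext^1(\ff,\oo_{\p^n})$, so it suffices to prove that this group vanishes. The main obstacle is that $\ff$ need \emph{not} be locally free: the section $s$ may vanish in codimension $\geq 2$, and there $\ff$ drops rank. Consequently one cannot simply identify $\ext^1(\ff,\oo_{\p^n})$ with $H^1(\ff^*)$, as that identification requires $\ff$ to be a vector bundle (this is exactly the gap noted in the remark following Proposition \ref{prop:-2}). Instead I would invoke the $\ext$-form of Serre--Grothendieck duality, which is valid for an arbitrary coherent sheaf, to obtain $\ext^1(\ff,\oo_{\p^n})\cong H^{n-1}(\ff(-n-1))^*$. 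The problem thus reduces to showing $h^{n-1}(\ff(-n-1))=0$.

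To establish this vanishing I would carry out two short cohomology computations (one may assume $n\geq 2$, as for $n\leq 1$ every bundle on $\p^n$ splits and the statement is clear). First, twisting the defining sequence by $\oo_{\p^n}(-n)$ gives $0\to\oo_{\p^n}(-n)\to\e(-n-1)\to\ff(-n)\to 0$; since $H^{n-1}(\oo_{\p^n}(-n))=0$ and $H^n(\oo_{\p^n}(-n))\cong H^0(\oo_{\p^n}(-1))^*=0$, the long exact sequence yields $h^{n-1}(\ff(-n))=h^{n-1}(\e(-n-1))$, and the latter equals $h^1(\e^*)=0$ by Serre duality and the standing assumption $h^1(\e^*)=0$. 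Second, the restriction sequence $0\to\ff(-n-1)\to\ff(-n)\to\ff_H(-n)\to 0$ produces the exact piece $H^{n-2}(\ff_H(-n))\to H^{n-1}(\ff(-n-1))\to H^{n-1}(\ff(-n))$, in which the left-hand term vanishes by hypothesis and the right-hand term vanishes by the previous computation. Hence $h^{n-1}(\ff(-n-1))=0$, so $\ext^1(\ff,\oo_{\p^n})=0$, the sequence splits, and the lemma follows.
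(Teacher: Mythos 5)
Your proof is correct and takes essentially the same route as the paper's: the same two cohomology computations (the twisted defining sequence giving $h^{n-1}(\ff(-n))=h^{n-1}(\e(-n-1))=h^1(\e^*)=0$, then the restriction sequence giving $h^{n-1}(\ff(-n-1))=0$), combined with the coherent-sheaf form of Serre--Grothendieck duality to conclude $\ext^1(\ff,\oo_{\p^n})=0$ and hence the splitting. Your explicit care about $\ff$ not being locally free is exactly the subtlety the paper flags in the remark following Proposition \ref{prop:-2}.
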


\begin{proof}
We deduce that $h^{n-1}(\ff(-n))=h^{n-1}(\e(-n-1))=h^1(\e^*)=0$ from the exact sequence $0\to\oo_{\p^n}(-n)\to\e(-n-1)\to\ff(-n)\to 0$, Serre duality, and the assumption that $h^1(\e^*)=0$ throughout the paper. Therefore, we get $h^{n-1}(\ff(-n-1))=0$ from the restriction sequence $0\to\ff(-n-1)\to\ff(-n)\to\ff_H(-n)\to 0$. As $\e(-1)\in{\ext}^1(\ff,\oo_{\p^n})=H^{n-1}(\ff(-n-1))=0$, we deduce that $\e(-1)=\oo_{\p^n}\oplus\ff$.
\end{proof}

From now on, we also assume that $c_2\leq 4$ (cf. Lemma \ref{lem:2nd
reduction}).

\begin{prop}\label{prop:e(-1)}
If $h^0(\e(-2))=0$ and $h^0(\e(-1))\neq 0$ then one of the following
holds:
\begin{itemize}
\item $\e=\oo_{\p^n}(1)\oplus\oo_{\p^n}(1)\oplus\oo_{\p^n}(1)$;
\item $\e=\oo_{\p^n}(1)\oplus\oo_{\p^n}(1)\oplus T_{\p^n}(-1)$;
\item $\e=\oo_{\p^3}(1)\oplus\Omega_{\p^3}(2)$.
\end{itemize}
\end{prop}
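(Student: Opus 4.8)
The plan is to run the same strategy as in Proposition \ref{prop:-2}: from a nonzero section $s\in H^0(\e(-1))$ I form the exact sequence
$$0\to\oo_{\p^n}\to\e(-1)\to\ff\to 0,$$
and I try to apply Lemma \ref{lem:F bundle}, whose output $\e=\oo_{\p^n}(1)\oplus\ff(1)$ reduces the proposition to the classification of globally generated bundles with $c_1=2$ from \cite{su2}. We may assume $k\geq2$, since a rank one globally generated bundle with $c_1=3$ is $\oo_{\p^n}(3)$, already excluded by $h^0(\e(-2))=0$. Let $Z$ be the zero scheme of $s$. I first check that $Z$ has codimension $\geq2$: a divisorial component of degree $m\geq1$ would let $s$ factor through $\oo_{\p^n}(m)\hookrightarrow\e(-1)$, giving a nonzero element of $H^0(\e(-1-m))\subseteq H^0(\e(-2))=0$, a contradiction. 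Hence $\oo_{\p^n}\cdot s$ is saturated, $\ff$ is torsion-free, locally free off $Z$, and $\ff(1)$ is globally generated with $c_1(\ff(1))=2$.

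The whole difficulty is concentrated in verifying the hypothesis $h^{n-2}(\ff_H(-n))=0$ of Lemma \ref{lem:F bundle} for some hyperplane $H$; once this is known the lemma gives the splitting $\e(-1)=\oo_{\p^n}\oplus\ff$, which in particular \emph{forces} $\ff$ to be a vector bundle (a direct summand of $\e(-1)$) and hence $Z=\emptyset$ a posteriori, sidestepping the subtlety of the Remark above. I would establish the vanishing by induction on $n$. For $n=1$ the hypotheses force $\e=\oo_{\p^1}(1)^{\oplus3}$ directly. For $n=2$, $Z$ is finite, so a general line $L$ avoids $Z$, $\ff_L$ is locally free, and $\ff_L(1)$ is a globally generated bundle with $c_1=2$ on $\p^1$, i.e. $\ff_L(1)=\bigoplus\oo_L(c_i)$ with $c_i\geq0$ and $\sum c_i=2$; since then each $c_i\leq2$ we get $h^0(\ff_L(-2))=0$, as required.

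For the inductive step $n\geq3$ I restrict to a general hyperplane $H\cong\p^{n-1}$. Since $h^0(\e(-2))=0$, Proposition \ref{prop:-2}(ii) gives $h^0(\e_H(-2))=0$, while $s|_H\neq0$ because $s\notin H^0(\e(-1)\otimes I_H)=H^0(\e(-2))=0$, so $h^0(\e_H(-1))\neq0$. Applying the inductive hypothesis to the reduction of $\e_H$ furnished by Lemma \ref{lem:1st reduction}, I obtain that $\e_H$ splits as $\oo_H(1)$ plus a globally generated bundle with $c_1=2$, and that the quotient $\ff_H=\ff|_H$ (which coincides with the cokernel of $s|_H$ for general $H$, by torsion-freeness of $\ff$) is locally free. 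If $\dim Z\geq1$ then $Z$ meets every hyperplane, contradicting this local freeness; hence $Z$ is finite, and on a general $H$ avoiding $Z$ the bundle $\ff_H(1)$ is one of the globally generated $c_1=2$ bundles of \cite{su2}, for each of which the intermediate cohomology group $H^{n-2}(\ff_H(-n))=H^{n-2}(\ff_H(1)(-n-1))$ vanishes (for sums of $\oo_H(a)$ because $0<n-2<n-1$, and for $T_{\p^{n-1}}(-1)$ and $\Omega_{\p^3}(2)$ by a short Euler- or Bott-type computation).

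With the vanishing in hand, Lemma \ref{lem:F bundle} yields $\e=\oo_{\p^n}(1)\oplus\ff(1)$ with $\ff(1)$ a globally generated vector bundle of $c_1=2$ on $\p^n$, so by \cite{su2} it is one of $\oo_{\p^n}(2)$, $\oo_{\p^n}(1)^{\oplus2}$, $\oo_{\p^n}(1)\oplus T_{\p^n}(-1)$, $T_{\p^n}(-1)^{\oplus2}$ or $\Omega_{\p^3}(2)$. The first is impossible since it makes $h^0(\e(-2))\neq0$, and the fourth is impossible since it gives $c_2(\e)=5>4$; the remaining three produce exactly the three bundles in the statement, with $c_2(\e)=3,4,4$. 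The main obstacle is thus not the final Chern-class bookkeeping but the inductive control of $Z$: making sure the restricted bundle $\e_H$ genuinely falls under the inductive hypothesis after reduction, and that the case $n=4$ closes up correctly, given that $\Omega_{\p^3}(2)$ can occur on the hyperplane but not as $\ff(1)$ on $\p^4$.
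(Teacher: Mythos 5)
Your skeleton (find a hyperplane $H$ avoiding $Z$, verify $h^{n-2}(\ff_H(-n))=0$, apply Lemma \ref{lem:F bundle}, then invoke the $c_1=2$ classification of \cite{su2}) is the same as the paper's, and your base cases $n=1,2$ and the final bookkeeping are fine; but the inductive step for $n\geq 3$ has two genuine gaps. The first is the finiteness of $Z$. Your divisor argument only gives $\mathrm{codim}\,Z\geq 2$, so $Z$ may still contain a curve, which meets \emph{every} hyperplane; then no $H$ with $\ff_H$ locally free exists and the restriction step collapses. Your proposed rescue --- that the inductive hypothesis forces $\ff_H$ to be locally free --- does not hold: Lemma \ref{lem:1st reduction} only exhibits $\e_H$ as a quotient $0\to\oo_H^{\oplus s}\to\ggg_H\oplus\oo_H^{\oplus r}\to\e_H\to 0$, and such a quotient need not split off $\oo_H(1)$ (so your sentence ``$\e_H$ splits as $\oo_H(1)$ plus a globally generated bundle with $c_1=2$'' is unjustified); moreover, even when $\e_H$ is completely known, the cokernel of the \emph{given} section $s|_H$ of $\e_H(-1)$ can fail to be locally free. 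The paper proves finiteness (in fact length $\leq 2$) by an argument your proposal never replaces: a length-$3$ subscheme $T\subset Z$ lies in a plane $\Pi$, a general rank-$2$ quotient $\q$ of $\e_\Pi$ inherits a section of $\q(-1)$ vanishing on $T$ with finite zero locus, and this contradicts $c_2(\q(-1))=c_2(\q)-2\leq 2$. Note that this is where the hypothesis $c_2\leq 4$ does its real work; in your write-up it appears only at the very end, to discard $T_{\p^n}(-1)^{\oplus 2}$, which is a sign that something essential is missing.

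The second gap is that your list of possible $\ff_H(1)$ is too short. All you know is that $\ff_H(1)$ is globally generated with $c_1=2$ and $c_2\leq 2$; you may \emph{not} assume $h^0((\ff_H(1))^*)=h^1((\ff_H(1))^*)=0$, so the classification of \cite{su2} also allows the non-split quotients: $\ff_H(1)=\ggg\oplus\oo_H^{\oplus k-n}$ with $\ggg$ a non-split quotient $0\to\oo_H\to\oo_H(1)\oplus T_H(-1)\to\ggg\to 0$, and, for $n=4$, $\nn(1)\oplus\oo_{\p^3}^{\oplus k-3}$ (null correlation) or quotients of $\Omega_{\p^3}(2)\oplus\oo_{\p^3}^{\oplus r}$. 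These are precisely the cases that occupy most of the paper's proof: the case $\ff_H(1)=\ggg\oplus\oo_H^{\oplus k-n}$ needs separate ad hoc arguments for $n=3$ and $n\geq 4$, and in the $n=3$ case the splitting $\e=\oo_{\p^3}(1)\oplus\ff(1)$ leads to $\ff(1)=\nn(1)\oplus\oo_{\p^3}^{\oplus k-3}$, which is excluded only by the standing global hypothesis $h^1(\e^*)=0$ --- no Euler/Bott computation on the hyperplane can rule it out. So your ``short Euler- or Bott-type computation'' does not cover the cases where the real difficulty lies, and the induction cannot close as stated. (Incidentally, the paper does not induct on the proposition at all: it applies the $c_1=2$ classification directly to $\ff_H(1)$ on $H$, which is exactly why it must, and does, confront these quotient cases.)
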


\begin{proof}
For $n=1$, the result is obvious, so we assume that $n\geq 2$. Let $s\in
H^0(\e(-1))$ be a non-zero section, and consider the exact sequence
of sheaves
$$0\to\oo_{\p^n}\to\e(-1)\to\ff\to 0.$$ Let
$Z\subset\p^n$ be the zero locus of $s$. We claim that $Z$ is a
finite scheme of length at most $2$. To get a contradiction, let
$T\subset Z$ be a subscheme of length $3$ and let $\Pi\subset\p^n$
be a plane containing $T$. Consider the restriction $\e_{\Pi}$ and
the quotient $$0\to\oo_{\Pi}^{\oplus k-2}\to\e_{\Pi}\to\q\to 0$$
(cf. \cite[Ch.~I, Lemma 4.3.1]{oko}). Then $\q$ is a globally
generated vector bundle of rank $2$, $c_1(\q)=c_1(\e_{\Pi})=3$ and
$c_2(\q)=c_2(\e_{\Pi})\leq 4$. The restriction to ${\Pi}$ of the
non-zero section $s\in H^0(\e(-1))$ yields a non-zero section in
$H^0(\e_{\Pi}(-1))$ by Proposition \ref{prop:-2}(ii). Therefore,
since $H^0(\e_{\Pi}(-1))\cong H^0(\q(-1))$, we get a non-zero
section $\sigma \in H^0(\q(-1))$ vanishing on $T\subset\Pi$. Since
the zero locus of $\sigma$ is finite as otherwise $\sigma\in
H^0(\q(-2))\cong H^0(\e_{\Pi}(-2))=0$, we get
$c_2(\q(-1))\geq 3$ contradicting the fact that
\[
c_2(\q(-1))=(-1)^2-c_1(\q)+c_2(\q)=c_2(\q)-2\leq 2,
\]
and hence proving the claim. Now consider the restriction
$$0\to\oo_H\to\e_H(-1)\to\ff_H\to 0$$ to a hyperplane $H\subset\p^n$ such that
$Z\cap H=\emptyset$. Then $\ff_H(1)$ is a globally generated vector
bundle, $c_1(\ff_H(1))=2$ and $c_2(\ff_H(1))\leq 2$. Therefore
$\ff_H(1)$ can be as in \cite[Theorem 1.1]{su2}, cases (i)-(iv). In
case (i) we have $\ff_H(1)=\oo_H(2)\oplus\oo_H^{\oplus k-2}$, so
$h^{n-2}(\ff_H(-n))=0$ and hence $\e=\oo_{\p^n}(1)\oplus\oo_{\p^n}(2)$ by
Lemma \ref{lem:F bundle}, giving a contradiction. In case (ii) we
have $\ff_H(1)=\oo_H(1)^{\oplus 2}\oplus\oo_H^{\oplus k-3}$, so
$h^{n-2}(\ff_H(-n))=0$ and therefore
$\e=\oo_{\p^n}(1)\oplus\oo_{\p^n}(1)\oplus\oo_{\p^n}(1)$ by Lemma
\ref{lem:F bundle}. In case (iv) we also have $h^{n-2}(\ff_H(-n))=0$.
Therefore, by Lemma
\ref{lem:F bundle}, $\ff(1)$ is a globally generated vector bundle such that $\ff_H(1)$ is either
$\Omega_{\p^3}(2)\oplus\oo_{\p^3}^{\oplus k-4}$ or
$\nn(1)\oplus\oo_{\p^3}^{\oplus k-3}$, and we get a contradiction by
\cite[Theorem 1.1]{su2}. If $\ff_H(1)$ is as in case (iii), we
remark that $\ff_H(1)$ is either
$T_H(-1)\oplus\oo_H(1)\oplus\oo_H^{k-n-1}$ or
$\ggg\oplus\oo_H^{k-n}$, where $\ggg$ is a vector bundle of rank
$n-1$ obtained as a quotient $$0\to\oo_H\to\oo_H(1)\oplus
T_H(-1)\to\ggg\to 0$$ (cf. \cite[Remark 3]{su2}). If
$\ff_H(1)=T_H(-1)\oplus\oo_H(1)\oplus\oo_H^{k-n-1}$ then
$h^{n-2}(\ff_H(-n))=0$, and hence $\e=\oo_{\p^n}(1)\oplus\ff(1)$ by Lemma
\ref{lem:F bundle}. Therefore, $\ff(1)$ is either
$T(-1)\oplus\oo_{\p^n}(1)$ or $\Omega_{\p^3}(2)$ by \cite[Theorem
1.1]{su2}. Let us see now that $\ff_H(1)=\ggg\oplus\oo_H^{k-n}$
yields a contradiction. Assume first that $n=3$. Then $h^1(\ff_H(-3))=h^1(\ggg(-4))=h^1(\ggg^*(-2))=0$, as
$\ggg^*(-2)\cong\ggg(-4)$ since $\ggg$ is of rank $2$ and
$c_1(\ggg(-4))=-6$. Therefore, by Lemma
\ref{lem:F bundle}, $\e=\oo_{\p^3}(1)\oplus\ff(1)$, and hence
$\ff(1)=\nn_{\p^3}(1)\oplus\oo_{\p^3}^{\oplus k-3}$ by \cite[Theorem
1.1]{su2}. This contradicts the assumption that $h^1(\e^*)=0$. Assume now that
$n\geq 4$. To get a contradiction, we point out that
$h^1(\ff^*_H(-1))=h^1(\ggg^*)=1$. Then it follows from the exact
sequence
$$0\to\ff^*_H(-1)\to\e_H^*\to\oo_H(-1)\to 0$$ that $h^1(\e_H^*)=1$.
Hence the exact sequence $$0\to\e^*(-1)\to\e^*\to\e_H^*\to 0$$
yields $h^2(\e^*(-1))\neq 0$, as we assume that $h^1(\e^*)=0$. Let us see
that $h^2(\e^*(-2))=0$. Consider the exact sequence
$$0\to\ff^*_H(-1-j)\to\e_H^*(-j)\to\oo_H(-1-j)\to 0.$$ Then
$h^i(\e_H^*(-j))=h^i(\ff^*_H(-1-j))=h^i(\ggg^*(-j))=0$ for
$i\in\{1,2\}$ and every integer $j\geq 2$ (here we use $n\geq 4$).
So we deduce from the exact sequence
$$0\to\e^*(-1-j)\to\e^*(-j)\to\e_H^*(-j)\to 0$$ and Serre's
vanishing theorem that $h^2(\e^*(-2))=0$. Therefore,
$h^2(\e^*(-2))=0$ and $h^2(\e^*(-1))\neq 0$ yields that
$h^2(\e_H^*(-1))\neq 0$, which is a contradiction as
$h^2(\e_H^*(-1))=h^2(\ff_H^*(-2))=h^2(\ggg^*(-1))=0$.
\end{proof}

Finally, we consider the case $h^0(\e(-1))=0$.

\begin{cor}\label{cor:n=4}
Assume that $n\geq 3$. If $h^0(\e(-1))=0$ but $h^0(\e_H(-1))\neq 0$ for
some hyperplane $H\subset\p^n$, then $n=4$ and $\e_{\p^3}$ is either
$\oo_{\p^3}(1)\oplus\Omega_{\p^3}(2)$, or a quotient
$0\to\oo_{\p^3}\to\oo_{\p^3}(1)\oplus\Omega_{\p^3}(2)\to\e_{\p^3}\to
0$ of rank $3$.
\end{cor}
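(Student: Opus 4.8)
The plan is to transport the hyperplane section back to $\p^n$ via the vanishing $h^1(\e^*)=0$, after pinning down $\e_H$ by the classification already obtained on $H\cong\p^{n-1}$. First I would record the standing vanishings: since multiplication by a general linear form embeds $H^0(\e(-2))$ into $H^0(\e(-1))=0$, we have $h^0(\e(-2))=0$, and hence $h^0(\e_H(-2))=0$ by Proposition \ref{prop:-2}(ii) (note $\dim H=n-1\geq 2$). As $h^0(\e_H(-1))\neq 0$, Lemma \ref{lem:1st reduction} applied to $\e_H$ followed by Proposition \ref{prop:e(-1)} shows that the reduced core $\ggg$ of $\e_H$ is one of $\oo_H(1)^{\oplus 3}$, $\oo_H(1)^{\oplus 2}\oplus T_H(-1)$, or $\oo_{\p^3}(1)\oplus\Omega_{\p^3}(2)$, and that $h^0(\e_H(-1))=h^0(\ggg(-1))$ equals $3$, $2$, $1$ respectively. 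Two ingredients then drive the argument: the restriction sequence $0\to\e(-2)\to\e(-1)\to\e_H(-1)\to 0$ together with $h^0(\e(-1))=h^0(\e(-2))=0$ yields an injection $H^0(\e_H(-1))\hookrightarrow H^1(\e(-2))$, so $h^1(\e(-2))\geq h^0(\ggg(-1))$; and the sequence $0\to\e^*\to\e^*(1)\to\e_H^*(1)\to 0$, with $h^1(\e^*)=0$, shows that every homomorphism $\e_H\to\oo_H(1)$ extends to a homomorphism $\e\to\oo_{\p^n}(1)$.

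Next I would exclude the first two cores by bounding $h^1(\e(-2))$ from above. Feeding the sequences $0\to\e(-j-1)\to\e(-j)\to\e_H(-j)\to 0$ into Serre's vanishing theorem gives $h^1(\e(-2))\leq\sum_{j\geq 2}h^1(\e_H(-j))$. When $\dim H\geq 3$ the first reduction contributes nothing, since $h^1(\oo_H(-j))=h^2(\oo_H(-j))=0$, so $h^1(\e(-2))\leq\sum_{j\geq 2}h^1(\ggg(-j))$; by Bott's formula this sum is $0$ for the first two cores (the summands $\oo_H(1)$ and $T_H(-1)$ have vanishing intermediate cohomology in this range) and equals $1$ for $\oo_{\p^3}(1)\oplus\Omega_{\p^3}(2)$. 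Comparing with the lower bounds $3,2,1$ from the connecting map excludes the first two cores and leaves only $\oo_{\p^3}(1)\oplus\Omega_{\p^3}(2)$, which forces $\dim H=3$, i.e. $n=4$. For the first core this can also be seen directly: the three projections extend to $\Phi\colon\e\to\oo_{\p^n}(1)^{\oplus 3}$, an isomorphism along $H$, and since $c_1(\e)=3$ its determinant lies in $H^0(\oo_{\p^n})$ and is a nonzero constant, so $\e\cong\oo_{\p^n}(1)^{\oplus 3}$, contradicting $h^0(\e(-1))=0$.

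The main obstacle is the dimension $n=3$, where $H=\p^2$ and the reduction terms $h^2(\oo_{\p^2}(-j))$ can be nonzero for $j\geq 3$, so that the telescoping upper bound above degenerates whenever $h^1(\e_H^*)>0$. Here I would instead argue numerically, as at the end of the proof of Proposition \ref{prop:-2}: using $c_1=3$, $c_2\leq 4$ and the Hirzebruch--Riemann--Roch theorem on $\p^3$ one computes $\chi(\e(-j))$ and combines the restriction sequences with Serre duality to contradict the existence of a non-liftable section, thereby ruling out $n=3$ altogether. Together with the previous paragraph this leaves $n=4$ as the only possibility.

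Finally, with $n=4$ and $H=\p^3$, the restriction $\e_{\p^3}=\e_H$ is built from $\ggg=\oo_{\p^3}(1)\oplus\Omega_{\p^3}(2)$ through the first-reduction sequence $0\to\oo_{\p^3}^{\oplus s}\to\ggg\oplus\oo_{\p^3}^{\oplus r}\to\e_{\p^3}\to 0$, where $r=h^0(\e_{\p^3}^*)$ and $s=h^1(\e_{\p^3}^*)$. I would close by computing these indices from the dual restriction sequence on $\p^4$: using $h^0(\e^*)=h^1(\e^*)=0$ one gets $r=h^0(\e_{\p^3}^*)=h^1(\e^*(-1))=0$ and $s=h^1(\e_{\p^3}^*)\leq h^2(\e^*(-1))\leq 1$. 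The value $s=0$ gives $\e_{\p^3}=\oo_{\p^3}(1)\oplus\Omega_{\p^3}(2)$, while $s=1$ (and $r=0$) gives exactly the rank-$3$ quotient $0\to\oo_{\p^3}\to\oo_{\p^3}(1)\oplus\Omega_{\p^3}(2)\to\e_{\p^3}\to 0$, as claimed.
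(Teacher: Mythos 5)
Your handling of the case $n\geq 4$ is correct and is in substance the paper's own argument: after reducing $\e_H$ via Lemma \ref{lem:1st reduction} and Proposition \ref{prop:e(-1)}, your comparison of the injection $H^0(\e_H(-1))\hookrightarrow H^1(\e(-2))$ (lower bounds $3,2,1$) with the telescoped bound $h^1(\e(-2))\leq\sum_{j\geq 2}h^1(\e_H(-j))$ (upper bounds $0,0,1$) is a quantified repackaging of the paper's vanishing argument, and it correctly isolates the core $\oo_{\p^3}(1)\oplus\Omega_{\p^3}(2)$, forcing $n=4$. (Your parenthetical determinant argument for the core $\oo_H(1)^{\oplus 3}$ tacitly assumes $\e_H\cong\oo_H(1)^{\oplus 3}$, i.e.\ $r=s=0$ and $k=3$, which is not known at that point; but that aside is redundant.) The first genuine gap is the case $n=3$: you assert that Hirzebruch--Riemann--Roch ``contradicts the existence of a non-liftable section'', but it does not. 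Running the computation as the paper does (core $\oo_H(1)^{\oplus 2}\oplus T_H(-1)$, so $c_2=4$): $h^0(\e)\leq k+5$ from the restriction sequence, $h^2(\e)=h^3(\e)=0$ from Serre duality and the dual restriction sequences, and $\chi(\e)=k+5+c_3/2$, one only obtains $c_3/2\leq -h^1(\e)\leq 0$, hence $c_3=0$ and $h^1(\e)=0$ --- which is not by itself absurd. The contradiction requires an external input: using $c_3=0$ and \cite[Ch.~I, Lemmas 4.3.1 and 4.3.2]{oko} one reduces to a rank-two globally generated bundle on $\p^3$ with $c_1=3$, $c_2=4$, and then invokes the classification \cite[Theorem 1.1]{c-e}. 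This ingredient is absent from your sketch, so $n=3$ is not actually excluded.

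The second gap is in the final determination of $r$ and $s$. The identities $r=h^0(\e_{\p^3}^*)=h^1(\e^*(-1))$ and $s=h^1(\e_{\p^3}^*)\leq h^2(\e^*(-1))$ do follow from $0\rt\e^*(-1)\rt\e^*\rt\e_{\p^3}^*\rt 0$ and $h^0(\e^*)=h^1(\e^*)=0$; but precisely because $h^1(\e^*(-1))=r$ is an identity, quoting it as the reason that $r=0$ is circular, and the bound $h^2(\e^*(-1))\leq 1$ is asserted with no justification at all --- neither follows from the standing vanishings, since the restriction sequences alone only relate these groups back to $r$ and $s$. The paper fills the first hole by proving $h^i(\e_{\p^3}^*(-j))=h^i(\ggg^*(-j))=0$ for $i\in\{0,1\}$ and all $j\geq 1$ from the dual of the first-reduction sequence, and then telescoping with Serre's vanishing theorem to get $h^0(\e^*(-1))=h^1(\e^*(-1))=0$; and it never needs $h^2(\e^*(-1))\leq 1$: instead $s\leq 1$ comes from the Chern-class observation that, once $r=0$, one has $c_3(\e_{\p^3})=c_3(\oo_{\p^3}(1)\oplus\Omega_{\p^3}(2))\neq 0$, so the quotient $\e_{\p^3}$ must have rank at least $3$, i.e.\ $4-s\geq 3$. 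Both of your assertions are fillable (the second, for instance, by the same telescoping applied to $h^2$, where the only nonzero contribution is $h^2(\ggg^*(-2))=h^2(T_{\p^3}(-4))=h^2(\Omega^2_{\p^3})=1$), but as written they are unproved claims sitting at the crux of the argument.
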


\begin{proof}
Suppose first that $n\geq 4$. If $h^0(\e_H(-1))\neq 0$ then it follows from Lemma \ref{lem:1st
reduction} and Proposition \ref{prop:e(-1)} that $\e_H$ fits in an
exact sequence $0\to\oo_H^{\oplus s}\to\ggg\oplus\oo_H^{\oplus
r}\to\e_H\to 0$, where $r=h^0(\e^*_H)$, $s=h^1(\e^*_H)$ and either

\begin{enumerate}
\item[(i)] $\ggg=\oo_H(1)^{\oplus 3}$, or
\item[(ii)] $\ggg=\oo_H(1)^{\oplus 2}\oplus T_H(-1)$, or
\item[(iii)] $\ggg=\oo_{\p^3}(1)\oplus\Omega_{\p^3}(2)$.
\end{enumerate}

In cases (i) and (ii) we get $h^i(\e_H(-j))=h^i(\ggg(-j))=0$ for
$i\in\{0,1\}$ and every integer $j\geq 2$ (here we use $n\geq 4$).
Hence we deduce from the exact sequence
$$0\to\e(-j-1)\to\e(-j)\to\e_H(-j)\to 0$$ and Serre's vanishing
theorem that $h^0(\e(-2))=h^1(\e(-2))=0$. Therefore
$$h^0(\e(-1))=h^0(\e_H(-1))\neq 0,$$ yielding a contradiction.
Hence case (iii) holds and $n=4$. Furthermore, we claim that
$h^0(\e_H^*)=0$. From the dual sequence
$0\to\e^*_H\to\ggg^*\oplus\oo_H^{\oplus r}\to\oo_H^{\oplus s}\to 0$
we deduce that $h^i(\e_H^*(-j))=h^i(\ggg^*(-j))=0$ for $i\in\{0,1\}$
and every integer $j\geq 1$. From the exact sequence
$$0\to\e^*(-1-j)\to\e^*(-j)\to\e_H^*(-j)\to 0$$ and Serre's vanishing
theorem we get $h^0(\e^*(-1))=h^1(\e^*(-1))=0$, and hence
$h^0(\e_H^*)=h^0(\e^*)=0$. Therefore $\e_H$ is either
$\oo_{\p^3}(1)\oplus\Omega_{\p^3}(2)$, or a quotient
$0\to\oo_{\p^3}^{\oplus
s}\to\oo_{\p^3}(1)\oplus\Omega_{\p^3}(2)\to\e_H\to 0$ where, in the
latter, $s=1$ as $c_3(\oo_{\p^3}(1)\oplus\Omega_{\p^3}(2))\neq 0$.

Assume now that $n=3$. We argue as in Proposition \ref{prop:-2}. To get a contradiction, assume that $h^0(\e(-1))=0$ and $h^0(\e_H(-1))\neq 0$. Then we deduce from Proposition \ref{prop:e(-1)} that $\e_H$ is given by an exact sequence $$0\to\oo_H^{\oplus s}\to\oo_H(1)^{\oplus 2}\oplus T_H(-1)\oplus\oo_H^{\oplus k+s-4}\to \e_H\to 0$$ As $h^0(\e(-1))=0$, we deduce from the restriction sequence that $h^0(\e)\leq k+5$. We deduce that $h^3(\e)=h^0(\e^*(-4))=0$ and $h^2(\e)=h^1(\e^*(-4))=0$ from Serre duality and the exact sequence $0\to\e^*(-1-j)\to\e^*(-j)\to\e_H^*(-j)\to 0$. By the Hirzebruch-Riemann-Roch theorem, we get $\chi(\e)=\frac{1}{6}(c_1^3-3c_1c_2+3c_3)+c_1^2-2c_2+\frac{22}{12}c_1+k$, and hence $h^0(\e)-h^1(\e)=k+5+c_3/2\leq k+5-h^1(\e)$; that is, $c_3=0$ giving a contradiction (see for instance \cite[Theorem 1.1]{c-e}).
\end{proof}

Let us see that only the first case in Corollary \ref{cor:n=4}
actually occurs.

\begin{prop}\label{prop:omega}
Assume that $h^0(\e(-1))=0$ but $h^0(\e_H(-1))\neq 0$ for some hyperplane
$H\subset\p^4$. Then $\e\cong\Omega_{\p^4}(2)$.
\end{prop}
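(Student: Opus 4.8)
The plan is to realize $\e$ as the kernel of a surjection $\oo_{\p^4}(1)^{\oplus 5}\to\oo_{\p^4}(2)$, i.e.\ to reconstruct the twisted Euler sequence $0\to\Omega_{\p^4}(2)\to\oo_{\p^4}(1)^{\oplus 5}\to\oo_{\p^4}(2)\to 0$ that characterises $\Omega_{\p^4}(2)$. First I would fix the numerical data and dispose of the rank-$3$ alternative in Corollary \ref{cor:n=4}. By that corollary $n=4$ and $\e_H$ is either $\oo_{\p^3}(1)\oplus\Omega_{\p^3}(2)$ or the rank-$3$ quotient $\q$ with $0\to\oo_{\p^3}\to\oo_{\p^3}(1)\oplus\Omega_{\p^3}(2)\to\q\to 0$. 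In both cases $c_1(\e_H)=3$, $c_2(\e_H)=4$ and $c_3(\e_H)=2$, and since restriction $H^{2i}(\p^4,\zz)\to H^{2i}(\p^3,\zz)$ is an isomorphism for $i\le 3$, we get $c_1=3$, $c_2=4$, $c_3=2$ for $\e$ itself. If $\e$ had rank $3$ then $c_4(\e)=0$, and the Hirzebruch--Riemann--Roch theorem on $\p^4$ would give $\chi(\e)=55/6\notin\zz$, a contradiction (the same computation with rank $4$ and $c_4=1$ returns the admissible value $\chi=10$). Hence $\e$ has rank $4$ and $\e_H=\oo_{\p^3}(1)\oplus\Omega_{\p^3}(2)=\Omega_{\p^4}(2)|_H$.

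Next I would build the map. From the restriction sequence $0\to\e^*\to\e^*(1)\to\e_H^*(1)\to 0$, using the standing hypotheses $h^0(\e^*)=h^1(\e^*)=0$ and the identification $\e_H^*(1)=\oo_{\p^3}\oplus T_{\p^3}(-1)$, one finds $h^0(\e^*(1))=h^0(\e_H^*(1))=1+4=5$. The corresponding five homomorphisms yield $\phi\colon\e\to\oo_{\p^4}(1)^{\oplus 5}$. Since the restriction $H^0(\e^*(1))\to H^0(\e_H^*(1))$ is an isomorphism, $\phi|_H$ coincides with the evaluation map of $\e_H$; splitting $\e_H=\oo_{\p^3}(1)\oplus\Omega_{\p^3}(2)$ this is $\mathrm{id}\oplus(\text{twisted Euler map})$, hence injective with cokernel $\oo_{\p^3}(2)$. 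Consequently $\phi$ has rank $4$ along $H$, so it is generically injective; as $\e$ is a bundle and hence torsion-free, $\phi$ is injective, producing $0\to\e\xrightarrow{\phi}\oo_{\p^4}(1)^{\oplus 5}\to\mathcal C\to 0$ with $\mathcal C|_H=\oo_{\p^3}(2)$ and $c_1(\mathcal C)=2$.

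Finally I would identify $\mathcal C$ with $\oo_{\p^4}(2)$. This last sequence exhibits $\mathcal C$ as the cokernel of an injection of locally free sheaves, so $\mathcal C$ has projective dimension $\le 1$; by the Auslander--Buchsbaum formula a nonzero torsion subsheaf would force projective dimension $4$ at a closed point of its support, so $\mathcal C$ is torsion-free, i.e.\ $\mathcal C=I_W(2)$ with $W$ of codimension $\ge 2$. Comparing total Chern classes, $c(\e)\,c(\mathcal C)=(1+h)^5$ together with $c_2=4$, $c_3=2$ forces $c_2(\mathcal C)=c_3(\mathcal C)=0$, so $W$ has codimension $\ge 4$; but then a point of $W$ would again give projective dimension $3$, contradicting $\mathrm{pd}(\mathcal C)\le 1$ unless $W=\emptyset$. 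Therefore $\mathcal C=\oo_{\p^4}(2)$, and $0\to\e\to\oo_{\p^4}(1)^{\oplus 5}\to\oo_{\p^4}(2)\to 0$. The surjection is given by five linear forms with no common zero, hence linearly independent, so after a linear change of coordinates this is the twisted Euler sequence and $\e\cong\Omega_{\p^4}(2)$.

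The delicate point is the last paragraph: after constructing $\phi$ one knows only that its cokernel restricts to a line bundle on the single hyperplane $H$, and a priori $\mathcal C$ could be a nontrivial ideal-sheaf twist $I_W(2)$. The crux is thus to force $\mathcal C$ to be locally free, and this is exactly where the interplay between the projective-dimension bound $\mathrm{pd}(\mathcal C)\le 1$ (from the length-one locally free resolution) and the Chern-class vanishing $c_2(\mathcal C)=c_3(\mathcal C)=0$ is essential; the rank-$3$ exclusion via non-integrality of $\chi$ is comparatively routine and parallels the Hirzebruch--Riemann--Roch arguments already used for $n=3$ in Proposition \ref{prop:-2} and Corollary \ref{cor:n=4}.
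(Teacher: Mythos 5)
Your construction of $\phi\colon\e\to\oo_{\p^4}(1)^{\oplus 5}$ from the five sections of $\e^*(1)$ is correct, and your exclusion of the rank-$3$ case via non-integrality of $\chi(\e)$ is also correct (it is the paper's Schwarzenberger-condition argument in explicit Riemann--Roch form). However, the step you yourself single out as the crux fails. The claim that ``by Auslander--Buchsbaum a nonzero torsion subsheaf would force projective dimension $4$ at a closed point of its support'' is false: Auslander--Buchsbaum converts $\mathrm{pd}\,\mathcal C\le 1$ into $\mathrm{depth}\,\mathcal C_x\ge 3$ at closed points, but a torsion subsheaf $T\subset\mathcal C$ forces depth $0$ only at the \emph{generic} points of its support, so the correct conclusion is $\mathrm{pd}\,\mathcal C_\eta=\mathrm{codim}\,Z$ for each component $Z$ of $\mathrm{Supp}\,T$. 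Thus $\mathrm{pd}\,\mathcal C\le 1$ only rules out torsion in codimension $\ge 2$; torsion supported on a divisor is perfectly compatible with it (e.g.\ $\oo_{\p^4}(2)\oplus\oo_D$, $D$ a hyperplane, has rank $1$, projective dimension $1$, and depth $3$ at every closed point of $D$). Worse, your Chern-class step does not close the hole: the sheaf $\mathcal C_0=\oo_{\p^4}(1)\oplus\oo_D(2)$ has total Chern class $(1+h)(1+h-h^2+h^3-h^4)=1+2h$, so $c_1(\mathcal C_0)=2$ and $c_2(\mathcal C_0)=c_3(\mathcal C_0)=0$; moreover $\mathcal C_0(-1)=\oo_{\p^4}\oplus\oo_D(1)$ is globally generated by $5$ sections, so $\mathcal C_0$ is a quotient $0\to\mathcal K\to\oo_{\p^4}(1)^{\oplus 5}\to\mathcal C_0\to 0$ with $\mathcal K$ locally free of rank $4$ and $c(\mathcal K)=(1+h)^5/(1+2h)=1+3h+4h^2+2h^3+h^4$, exactly the Chern classes of $\Omega_{\p^4}(2)$. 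So a cokernel with divisorial torsion passes every test you impose, and your argument cannot distinguish it from $\oo_{\p^4}(2)$.

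The gap is repairable, but only by using the restriction data you establish and then abandon: since $\phi_H$ is injective as a bundle map, restricting the exact sequence to $H$ gives $\mathrm{Tor}_1^{\oo_{\p^4}}(\mathcal C,\oo_H)=0$, which excludes torsion supported on $H$ itself (such a $T$, being an $\oo_H$-module, would contribute $T(-1)\neq 0$ to this Tor); for torsion on a divisor $D\neq H$ one can argue that the image of $T|_H$ in $\mathcal C|_H=\oo_H(2)$ is torsion, hence zero, so $(\mathcal C/T)|_H\cong\oo_H(2)$, contradicting the fact that global generation of $(\mathcal C/T)(-1)$ and $c_1(T)\geq 1$ force $\mathcal C/T\cong I_{W'}(1)$. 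Neither argument appears in your write-up, and without one of them the identification $\mathcal C\cong\oo_{\p^4}(2)$ is unproved. It is worth noting that the paper's proof avoids this difficulty structurally: from $h^1(\e(-2))=1$ it builds the unique non-trivial extension $0\to\oo_{\p^4}\to\ggg\to\e^*(2)\to 0$, whose middle term is \emph{automatically} locally free, identifies $\ggg_H=\oo_H(1)^{\oplus 5}$ by Horrocks' criterion, and then gets $\ggg=\oo_{\p^4}(1)^{\oplus 5}$ from the splitting criterion of Okonek--Schneider--Spindler; dualizing produces your exact sequence with cokernel $\oo_{\p^4}(2)$ for free. In other words, the paper prescribes the sub and the quotient and solves for the middle term, whereas you prescribe the sub and the middle term and must solve for the quotient --- and that is precisely where cokernels, unlike extensions of bundles, can degenerate.
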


\begin{proof}
It follows from Corollary \ref{cor:n=4} that $\e_{\p^3}$ is either
$\oo_{\p^3}(1)\oplus\Omega_{\p^3}(2)$, or a quotient
$0\to\oo_{\p^3}\to\oo_{\p^3}(1)\oplus\Omega_{\p^3}(2)\to\e_{\p^3}\to
0$ of rank $3$.

If $\e_H=\oo_{\p^3}(1)\oplus\Omega_{\p^3}(2)$ then we see
from Serre's vanishing theorem and the restriction sequence
$$0\to\e(-j-1)\to\e(-j)\to\e_H(-j)\to 0$$ that
$h^1(\e(-2))=h^1(\e_H(-2))=1$. Therefore we have a non-trivial
extension $$0\to\oo_{\p^4}\to\ggg\to\e^*(2)\to 0$$ We claim that
$\ggg=\oo_{\p^4}(1)^{\oplus 5}$. In view of \cite[Ch.~I, Theorem
2.3.2]{oko}, it is enough to show that $\ggg_H=\oo_H(1)^{\oplus 5}$.
Let us see that $\ggg_H$ has no intermediate cohomology. From the
exact sequence
$$0\to\oo_H\to\ggg_H\to\oo_H(1)\oplus T_H\to 0,$$ we deduce that
$h^1(\ggg_H(j))=0$ for every integer $j$ and that $h^2(\ggg_H(j))=0$
for every integer $j\neq -4$. For $j=-4$, we have
$h^2(\ggg_H(-4))=h^1(\ggg_H^*)$. It follows from the exact sequence
$$0\to\oo_H(-1-j)\oplus\Omega_H(-j)\to\ggg_H^*(-j)\to\oo_H(-j)\to
0$$ that $h^0(\ggg_H^*(-j))=h^1(\ggg_H^*(-j))=h^2(\ggg_H^*(-j))=0$
for every $j\geq 1$. Therefore Serre's vanishing theorem applied to
the restriction sequence
$$0\to\ggg^*(-j-1)\to\ggg^*(-j)\to\ggg_H^*(-j)\to 0$$ yields
$h^1(\ggg^*(-1))=h^2(\ggg^*(-1))=0$, and hence
$h^1(\ggg_H^*)=h^1(\ggg^*)=0$. Then Horrocks' theorem (see, for instance, \cite[Ch.~I, Theorem 2.3.1]{oko}) implies that
$\ggg_H$ splits. Finally $c_1(\ggg_H)=5$ and $h^0(\ggg_H(-2))=0$, so
we get $\ggg_H=\oo_H(1)^{\oplus 5}$. Then $\e=\Omega_{\p^4}(2)$.

Assume now that $\e_H$ is given by a quotient
$$0\to\oo_{\p^3}\to\oo_{\p^3}(1)\oplus\Omega_{\p^3}(2)\to\e_{\p^3}\to
0$$ Then $c_t(\e)=c_t(\e_H)=1+3t+4t^2+2t^3$. Therefore, we get a
contradiction by the Schwarzenberger condition $(S^3_4)$
\cite[p.113]{oko} for $s=4$.
\end{proof}

We can now prove Theorem \ref{thm:main}.

\begin{proof}[Proof of Theorem \ref{thm:main}] We can assume that $h^0(\e^*)=h^1(\e^*)=0$ by Lemma
\ref{lem:1st reduction}; otherwise we get case (viii). If
$h^0(\e(-3))\neq 0$, then we get case (i) by Proposition
\ref{prop:-3}. If $h^0(\e(-3))=0$ but $h^0(\e(-2))\neq 0$, then we
get cases (ii) and (iii) by Proposition \ref{prop:-2}. If
$h^0(\e(-2))=0$ but $h^0(\e(-1))\neq 0$, then we get cases (iv), (v)
and (vi) by Proposition \ref{prop:e(-1)}. If $h^0(\e(-1))=0$ but
$h^0(\e_H(-1))\neq 0$ for some hyperplane $H\subset\p^n$, then we get
case (vii), by Corollary \ref{cor:n=4} and Proposition \ref{prop:omega}. Furthermore, we claim
that there is no vector bundle $\e$ on $\p^5$ such that
$\e_H=\Omega_{\p^4}(2)\oplus\oo_{\p^4}^{\oplus k-4}$. As
$h^i(\e^*_H(-j))=0$ for $i\in\{0,1\}$ and every integer $j\geq 1$,
we deduce from Serre's vanishing theorem and the restriction
sequence
$$0\to\e^*(-1-j)\to\e^*(-j)\to\e^*_H(-j)\to
0$$ that $h^i(\e^*(-1))=0$ for $i\in\{0,1\}$. Therefore
$h^0(\e^*)=h^0(\e_H^*)=k-4$ and hence there exists a rank-$4$ vector
bundle $\ggg$ such that $\e=\ggg\oplus\oo_{\p^5}^{\oplus k-4}$. Then
$c_t(\ggg)=c_t(\e_H)=1+3t+4t^2+2t^3+t^4$ and we get a contradiction
by the Schwarzenberger condition $(S^4_5)$ \cite[p.113]{oko} for $s=5$.
This proves the claim. Finally, if $h^0(\e(-1))=0$ and
$h^0(\e_H(-1))=0$ for every hyperplane $H\subset\p^n$ then we get
$$h^0(\e)\leq h^0(\e_H)\leq\dots\leq h^0(\e_{\p^2})\leq
h^0(\e_{\p^1})=k+3.$$ Let us see that this is impossible. Consider
the exact sequence
$$0\to\mathcal K\to\oo_{\p^2}^{\oplus h^0(\e_{\p^2})}\to\e_{\p^2}\to 0$$ where $\mathcal
K$ is a vector bundle on $\p^2$ with $h^0(\mathcal K)=h^1(\mathcal
K)=0$, $c_1(\mathcal K)=-3$ and $c_2(\mathcal K)=c_2(\mathcal
K^*)=9-c_2\geq 5$. Then the Hirzebruch-Riemann-Roch theorem
$$\chi(\mathcal K)=\frac{1}{2}(c_1(\mathcal K)^2-2c_2(\mathcal
K)+3c_1(\mathcal K))+rk(\mathcal K)$$ for vector bundles on $\p^2$
yields $$0\leq h^2(\mathcal K)=-c_2(\mathcal K)+h^0(\e_{\p^2})-k\leq
-5+h^0(\e_{\p^2})-k$$ i.e. $h^0(\e_{\p^2})\geq k+5$, so we get a
contradiction.
\end{proof}

As a consequence, we obtain the classification of globally generated
vector bundles $\e$ on $\p^n$ with $c_1=3$ and no restriction on
$c_2$.

\begin{proof}[Proof of Corollary \ref{cor:main}]
The proof follows from Theorem \ref{thm:main} and Lemmas \ref{lem:1st reduction} and \ref{lem:2nd
reduction}.
\end{proof}

\begin{rem}\label{rem:hidden}
Some well-known globally generated vector bundles seem to be \emph{hidden} in Theorem \ref{thm:main}(viii) (e.g., $T_{\p^2}$) and Corollary \ref{cor:main}(viii) (e.g., the Tango bundle $\mathcal T$ given by the exact sequence $0\to T_{\p^4}(-2)\to\oo_{\p^4}^{\oplus 7}\to\mathcal T\to 0$; see, for instance, \cite[Ch.~I, $\S4$]{oko}). They can be easily detected in our classification by means of \cite[Ch.~I, Lemmas 4.3.1 and 4.3.2]{oko}. In this context, we point out that the only globally generated vector bundle of rank $k$ on $\p^n$ with $c_1=3$ and $k<n$ which does not split is the Tango bundle $\mathcal T$ of rank $3$ on $\p^4$, as one immediately deduces from Theorem \ref{thm:main} and Corollary \ref{cor:main} that $c_n(\e)=0$ if and only if $\e=\Omega_{\p^4}^2(2)^*\cong\wedge^2T_{\p^4}(-2)$, giving the following diagram:

\[
\xymatrix
{
        &                                 & 0                                 \ar[d] & 0 \ar[d]                            &    \\
        &                                 & \oo_{\p^4}^{\oplus 3}   \ar@{=}[r]\ar[d] & \oo_{\p^4}^{\oplus 3}        \ar[d] &    \\
0\ar[r] & T_{\p^4}(-2)   \ar[r]\ar@{=}[d] & \oo_{\p^4}^{\oplus 10}      \ar[r]\ar[d] & \wedge^2T_{\p^4}(-2)   \ar[r]\ar[d] & 0  \\
0\ar[r] & T_{\p^4}(-2)   \ar[r]           & \oo_{\p^4}^{\oplus 7}       \ar[r]\ar[d] & \mathcal T\ar[r]\ar[d]              & 0  \\
        &                                 & 0                                        & 0                                   &    \\
}
\]
\end{rem}

\begin{rem}
As in \cite{su2}, one can easily deduce the classification of triple Veronese embeddings of $\p^r$ in a Grassmannian of $(k-1)$-planes from Theorem \ref{thm:main} and Corollary \ref{cor:main}. The case $k=2$ has been studied in \cite{huh}. Globally generated vector bundles and embeddings in Grassmannians are closely related to matrices of constant rank on projective spaces (see \cite{m-m} and \cite{f-m}), but we will not consider this matter in this note.
\end{rem}

\begin{rem}
Following the research initiated in \cite{su2}, globally generated vector bundles and reflexive sheaves with low first Chern class on projective spaces and quadric hypersurfaces have recently been studied by several authors (see \cite{c-e}, \cite{ellia}, \cite{man}, \cite{a-m}, \cite{b-h-m1}, \cite{b-h-m2} and \cite{b-h-m3}).
\end{rem}

\end{document}